\documentclass[]{article}
\usepackage{graphicx}
\usepackage{amsfonts}
\usepackage{amsmath}
\usepackage{amssymb}
\usepackage{fancyhdr}
\usepackage{titlesec}
\usepackage{indentfirst}
\usepackage{booktabs}
\usepackage{verbatim}
\usepackage{color}
\usepackage{amsthm}
\usepackage{subfigure}
\usepackage{caption}

\usepackage{hyperref}
\usepackage[page,header]{appendix}
\usepackage{titletoc}

\numberwithin{equation}{section}
\newtheorem{theorem}{Theorem}[section]

\newtheorem{proposition}[theorem]{Proposition}

\newtheorem{remark}[theorem]{Remark}

\newcommand{\rd}[1]{\,\mathrm{d}#1}
\newcommand{\RN}[1]{
  \textup{\uppercase\expandafter{\romannumeral#1}}
}

\begin{document}

\title{Explicit charge-conserving and modified energy-conserving PIC methods using the SAV approach}
\author{Chengyu Fan\footnote{Department of Applied Mathematics, University of Washington, Seattle, WA 98195 (whitefcy@uw.edu).}, \ Nicolas Crouseilles\footnote{Inria Rennes (Mingus team) \& IRMAR, UMR CNRS 6625, Univ Rennes, Rennes F-35042, France ENS Rennes (nicolas.crouseilles@inria.fr).}, \  Jingwei Hu\footnote{Department of Applied Mathematics, University of Washington, Seattle, WA 98195 (hujw@uw.edu).}}     
\date{}

\maketitle

\begin{abstract}
We develop explicit second-order particle-in-cell (PIC) methods for the Vlasov--Amp\`ere and Vlasov--Maxwell systems using the scalar auxiliary variable (SAV) approach. Two schemes are proposed for each system: a predictor-based scheme and a symmetric splitting scheme using a scaled electric field. The methods exactly preserve the discrete Gauss law and a modified total energy. Numerical experiments for electrostatic and electromagnetic problems demonstrate the expected dynamics as well as the conservation properties of both invariants.
\end{abstract}

{\small 
{\bf Key words.} particle-in-cell method, scalar auxiliary variable, Vlasov--Amp\`ere, Vlasov--Maxwell 

{\bf AMS subject classifications.} 
} 65M75, 35Q83, 82D10

\section{Introduction}
Particle-in-cell (PIC) methods are among the most widely used numerical tools for Vlasov equations and kinetic plasma simulations \cite{birdsall-2018, verboncoeur-2005, hockney-1988, grigoryev2012numerical}. By representing the distribution function with particles in phase space and coupling them to electromagnetic fields on a mesh, PIC methods avoid a direct grid discretization of the full phase space and remain relatively simple, robust, and efficient. 

Exact energy conservation and charge conservation, or equivalently the preservation of Gauss's law, are critical for the reliability of numerical results. The discretization inherent in PIC methods introduces numerical errors. In long-time simulations, if energy conservation is violated, the accumulation of these numerical errors and statistical particle noise may lead to unphysical energy growth known as grid instability \cite{birdsall-2018, ricketson2025}. If charge conservation is violated, there may be unphysical field oscillations and filamentary structures \cite{pinto-2014, pinto-2022, stock-2011}. In the case of electromagnetic particle solvers, the preservation of Gauss's law has also been recognized as a key requirement  (see \cite{boris1971relativistic}, \cite{marder-1987}, \cite{villasenor1992}), since spurious virtual charges can arise when the numerical method fails to preserve it; see, e.g., \cite{pinto-2014}, \cite{stock-2011}.

Historically, explicit PIC methods are often momentum-conserving but not energy-conserving, leading to numerical heating. Lewis and Langdon's early energy-conserving PIC methods used the derivative of the discrete electrostatic energy as the electric force to achieve energy conservation, but only in the small timestep limit \cite{lewis-1970, langdon-1973}. Later, as modern linear and nonlinear solvers become efficient, fully implicit or semi-implicit PIC methods are designed to conserve energy exactly for arbitrary time steps \cite{lapenta-2017, chen-2011, markidis-2011}. However, explicit energy-conserving PIC methods are still of great interest as they are much simpler and more efficient. Ricketson and Hu's recent work \cite{ricketson2025} used a velocity correction method to achieve explicitness and energy conservation, but it may suffer from problematic particles caused by imaginary correction factors. 

Charge conservation adds another constraint to the scheme. Early approaches often perform a correction on the electric field to satisfy the discrete Gauss's law \cite{birdsall-2018, marder-1987, langdon-1992}. Later, the Villasenor-Buneman method \cite{villasenor1992} became popular for charge conservation, which modifies the shape function in the Amp\`ere source term to make it compatible with the charge update. Modern extensions of this method accommodate a wider range of shape functions \cite{esirkepov-2001} and achieve higher efficiency \cite{umeda-2003}.

The scalar auxiliary variable (SAV) method was introduced by Shen et al. for dissipative nonlinear systems \cite{shen2018sav}, which reformulates  the free energy into a quadratic form with a new scalar variable to achieve unconditional stability of a modified energy. Recently, Crouseilles et al. adapted this idea to semi-Lagrangian methods of the Vlasov--Amp\`ere (VA) and Vlasov--Maxwell (VM) systems \cite{crouseilles2026}. In their work, the SAV brings an extra degree of freedom into the system, making it possible to conserve the charge and total modified energy while keeping the scheme explicit. 

This paper adopts the SAV approach in the context of particle methods and develops explicit second-order PIC schemes that conserve both charge and a total modified energy for the VA and VM systems. Compared with the local velocity-correction strategy in \cite{ricketson2025}, the modified-energy balance is enforced through an auxiliary scalar variable rather than by correcting each particle velocity. As a result, the method does not encounter imaginary correction factors or  problematic particles. For each system, we introduce two schemes: a predictor-based scheme and a symmetric splitting scheme using a scaled
electric field. 

The rest of the paper is organized as follows. Section 2 presents the electrostatic Vlasov--Amp\`ere system, its SAV reformulation, the PIC semi-discretization, and the fully discrete schemes. Section 3 develops the corresponding  schemes for the electromagnetic Vlasov--Maxwell system in 1D2V setting. Their extension to the 3D3V case is also discussed. Numerical results are reported in Section 4. Finally, Section 5 concludes the paper and discusses future work.

\section{Electrostatic Vlasov-Amp\`ere system}

In this section, we present the VA system, its SAV reformulation, and the proposed numerical schemes. 

The VA system in 1D1V (one dimension in physical space and one dimension in velocity space) is given by
\begin{subequations}
\label{VA}
\begin{align}
\label{VA:Vlasov}
&\partial_t f + v\partial_x f +E\partial_v f = 0, \\
\label{VA:Ampere}
&\partial_t E = -J, \quad J=\int_{\mathbb{R}} v f \rd{v},
\end{align}
\end{subequations}
where $f=f(t,x,v)$ is the distribution function depending on time $t \ge 0$, position $x\in [0, L]$ ($L$ being the domain length), and velocity $v\in \mathbb{R}$. $E(t,x)$ is the electric field and $J(t,x)$ is the current density. We consider periodic boundary conditions in physical space and vanishing boundary conditions in the velocity direction. The initial condition is given by $f(0,x,v)=f_0(x,v)$ and normalized such that $\int_0^L\int_{\mathbb{R}}f_0(x,v)\rd{v}\rd{x}=L$.

The Amp\`ere equation \eqref{VA:Ampere} advances the electric field $E$, whereas Gauss's law
\begin{equation}
\label{VA:Gauss}
\partial_x E=\rho-1,\quad \rho=\int_{\mathbb{R}} f\rd{v} ,
\end{equation}
is a constraint on the initial data, where $\rho$ is the charge density. Indeed, integrating \eqref{VA:Vlasov} with respect to $v$, together with \eqref{VA:Ampere}, gives $\partial_t(\partial_xE-(\rho-1))=0$. Thus, an initially satisfied Gauss's law remains satisfied throughout the VA evolution. In this work, equation \eqref{VA:Gauss} is solved only at $t=0$ to construct a compatible electric field $E(0,x)$, after which $E$ is advanced by \eqref{VA:Ampere}. 

The VA system possesses infinitely many Casimir invariants. In particular, the total energy $\mathcal{E}_T$, defined as the sum of the kinetic and electric energies, 
\begin{equation}
    \mathcal{E}_{T}(t) = \mathcal{E}_K(t) + \mathcal{E}_E(t) = \frac{1}{2}\int_0^L\int_{\mathbb{R}}v^2 f \rd{v}\rd{x} + \frac{1}{2}\int_0^L E^2 \rd{x},
\end{equation}
remains constant over time.

\subsection{SAV formulation and the particle method}
\label{sec:SAV_VA}
To achieve both energy and charge conservation, we use the SAV method \cite{shen2018sav}. We introduce an auxiliary variable $r(t)$ such that $r^2(t)=\frac{1}{2}\int_0^L E^2(t, x) \rd{x}+C_0$, where $C_0$ is a constant chosen to ensure that the right-hand side is always positive. We then look for an equation satisfied by $r$.  
Taking the time derivative gives $\frac{\rd{r}}{\rd{t}} =\frac{1}{2r}\int_0^L E \partial_t E\rd{x} =-\frac{1}{2r}\int_0^L E J\rd{x}$, where we used \eqref{VA:Ampere} in the last equality. The VA system can be written equivalently as
\begin{subequations}
\label{VA_SAV}
\begin{align}
\label{VA_SAV1}
&\partial_t f + v\partial_x f + E \partial_v f = 0, \\
\label{VA_SAV2}
&\partial_t E = -J, \\
\label{VA_SAV3}
&\frac{\rd{r}}{\rd{t}} = - \frac{1}{2r}\int_0^L E J \rd{x}.
\end{align}
\end{subequations}

In the particle method, we approximate the distribution function with a sum of Dirac delta functions
\begin{equation}
    f(t,x,v) \approx\sum_{p=1}^{N_p} \omega_p \delta(x-x_p(t))\delta(v-v_p(t)),
\end{equation}
where $N_p$ is the number of particles, $\omega_p$ is the weight of the $p$-th particle, and $(x_p(t),v_p(t))$ is the position and velocity of the $p$-th particle. For the spatial domain $[0,L]$, we divide it into $N_x$ uniform cells of size $\Delta x=L/N_x$. The electric field $E$ and current density $J$ are defined at cell centers $x_{i+1/2}=(i+1/2)\Delta x$, while the charge density $\rho$ is  defined at cell interfaces $x_i=i\Delta x$, $i=0,\dots,N_x-1$. The grid indices are periodic modulo $N_x$, and the particle positions are periodic modulo $L$. 

The system \eqref{VA_SAV1}-\eqref{VA_SAV3} can be approximated using particles as
\begin{subequations}
\label{VA_SAV_semi-discrete}
\begin{align}
\label{VA_dx}
    \dot{x}_p(t) &= v_p(t), \\
\label{VA_dv}
    \dot{v}_p(t) &= E(x_p(t)), \\
\label{VA_dE}
\dot{E}_{i+1/2}(t) &= -J_{i+1/2}(t), \\
\label{VA_dr}
    \dot{r}(t) &= - \frac{1}{2r(t)}\sum_{i=0}^{N_x-1} E_{i+1/2}(t)J_{i+1/2}(t) \Delta x. 
\end{align}
\end{subequations} 
The initial positions and velocities of the particles, $\{x_p(0),v_p(0)\}_{p=1}^{N_p}$, are sampled randomly from the initial condition $f_0(x,v)$, in which case the particle weight $\omega_p=L/N_p$ is the same for all particles. $r$ is initialized as $r^2(0)=\frac{\Delta x}{2}\sum_{i=0}^{N_x-1} E_{i+1/2}^2(0) + C_0$.  A Gauss-law-compatible initial electric field is constructed using the following discrete Poisson solve:
\begin{equation}
\label{VA_Poisson}
\frac{1}{\Delta x}(E_{i+1/2}-E_{i-1/2})=\rho_i-1, \quad E_{i+1/2}=-\frac{1}{\Delta x}(\phi_{i+1}-\phi_i),
\end{equation}
where for the potential $\phi$, we impose $\phi_{N_x}=\phi_0$ and set $\phi_0=0$ to fix the additive constant. 

We need to define the particle-mesh coupling to transfer quantities between the spatial grid and the particle positions. The charge density $\rho_i$ and current density $J_i$ on the grid are defined as
\begin{equation}
\label{def:rho_J}
    \rho_i = \frac{1}{\Delta x}\sum_{p=1}^{N_p} \omega_p S^m(x_i-x_p), \quad J_{i+1/2} = \frac{1}{\Delta x}\sum_{p=1}^{N_p} \omega_p v_p S^{m}(x_{i+1/2}-x_p).
\end{equation}
The electric field at the particle position $x_p$ is defined as
\begin{equation}
\label{E_part}
	E(x_p) = \sum_{i=0}^{N_x-1} E_{i+1/2} S^m(x_p - x_{i+1/2}).
\end{equation}
Here, $S^m(x)$ is the $B$-spline function defined recursively by
\begin{align}
S^{0}(x) = \begin{cases} 
1 & |x| < \Delta x/2, \\
0 & \text{otherwise},
\end{cases}, 
\;\;\; 
S^{m}(x) = \frac{1}{\Delta x} \int_{x-\Delta x/2}^{x+\Delta x/2} S^{m-1}(y)\rd{y} \;\; \text{for} \ m\geq 1. 
\end{align}

We next recall some properties of the B-splines that will be used in the sequel, in particular for the proof of the charge conservation in the fully discrete schemes.

First, let us note that $S^m(x)$ and $S^{m-1}(x)$ satisfy
\begin{equation}
\label{SmSm-1}
\frac{\rd}{\rd{x}} S^m(x_i - x) =-(S^{m})'(x_i-x) =\frac{1}{\Delta x} \left( S^{m-1}\left(x_{i-1/2} - x\right)- S^{m-1}\left(x_{i+1/2} - x\right)\right).
\end{equation}
Moreover, we define the normalized primitive of $S^{m-1}$: 
\begin{equation}
    \label{Sm-1_prim}
\Phi^{m-1}(x) = \frac{1}{\Delta x}\int_{-\infty}^{x} S^{m-1}(y) \rd{y}, \;\; \mbox{ then } \;\;  \frac{1}{\Delta x}\int_{x_1}^{x_2} S^{m-1}(x)\rd{x}=\Phi^{m-1}(x_2)-\Phi^{m-1}(x_1).
\end{equation}

In this paper we use $m=2$, then the previous general formulas become
\begin{align}
S^{1}(x) = \begin{cases} 
1 - \left|\frac{x}{\Delta x}\right| & |x| < \Delta x, \\
0 & \text{otherwise},
\end{cases}, \quad
S^2(x) = \begin{cases} 
\frac{3}{4} - \left(\frac{x}{\Delta x}\right)^2 & |x| \leq \frac{\Delta x}{2}, \\
\frac{1}{2} \left( \frac{3}{2} - \left|\frac{x}{\Delta x}\right| \right)^2 & \text{for } \frac{\Delta x}{2} < |x| \leq \frac{3\Delta x}{2}, \\
0 & \text{otherwise},
\end{cases}
\end{align}
and
\begin{equation}
	\Phi^1(x) = \frac{1}{\Delta x}\int_{-\infty}^{x} S^1(y) \rd{y} = 
\begin{cases} 
0 & x < -\Delta x, \\
\frac{1}{2}(\frac{x}{\Delta x} + 1)^2 & -\Delta x \le x \le 0, \\
1 - \frac{1}{2}(1 - \frac{x}{\Delta x})^2 & 0 < x \le \Delta x, \\
1 & x > \Delta x.
\end{cases}
\end{equation}

\subsection{Scheme VA-I}
\label{subsec: scheme VA-I}

Having defined the spatial discretization, we now introduce the first fully discrete SAV scheme for \eqref{VA_SAV_semi-discrete}, VA-I, which uses a predictor and is second order accurate in time: 
\begin{subequations}
\begin{align}
    v_p^*-v_p^n&=\frac{\Delta t}{2} E^n(x_p^n),\\
    x_p^{n+1}- x_p^n&= \Delta t v_p^{*},\\
    \label{VA1_E}
    E_{i+1/2}^{n+1} -E_{i+1/2}^n&= -\Delta t \bar{J}_{i+1/2},\\
    \label{VA1_v}
  v_p^{n+1} -  v_p^n&= \Delta t E^{n+1/2}(x_p^{n+1/2}),\\
  \label{VA1_r}
  r^{n+1} - r^n &= -\frac{\Delta t \Delta x}{2 r^{n+1/2}}\sum_{i=0}^{N_x-1} E_{i+1/2}^{n+1/2} J_{i+1/2}^{n+1/2},
 \end{align} 
\end{subequations}
 
where
\begin{align}
E^{n}(x_p^{n})&=\sum_{i=0}^{N_x-1} E_{i+1/2}^{n}S^m(x_p^{n}-x_{i+1/2}),\\
\label{cur_VA-I}
    \bar{J}_{i+1/2}&=\frac{1}{\Delta t}\sum_{p=1}^{N_p} \omega_p [\Phi^{m-1}(x_{i+1/2}-x_p^n)-\Phi^{m-1}(x_{i+1/2}-x_p^{n+1})],\\
    E^{n+1/2}(x_p^{n+1/2})&=\sum_{i=0}^{N_x-1} E_{i+1/2}^{n+1/2}S^m(x_p^{n+1/2}-x_{i+1/2}),\\
    J_{i+1/2}^{n+1/2}&=\frac{1}{\Delta x} \sum_{p=1}^{N_p} \omega_p v_p^{n+1/2} S^m(x_{i+1/2}-x_p^{n+1/2}),
\end{align}
and
\begin{align}
x_p^{n+1/2}=\frac{x_p^n+x_p^{n+1}}{2}, \quad v_p^{n+1/2}=\frac{v_p^n+v_p^{n+1}}{2},\quad r^{n+1/2}=\frac{r^n+r^{n+1}}{2}, \quad E_{i+1/2}^{n+1/2}=\frac{E_{i+1/2}^n+E_{i+1/2}^{n+1}}{2}.
\end{align}

Note that the semi-discrete system \eqref{VA_SAV_semi-discrete} uses $J_{i+1/2}$ in both the
electric-field and SAV updates. 
After time discretization, however, two different approximations of the current are used: a Villasenor-Buneman style current $\bar{J}_{i+1/2}$ \cite{villasenor1992} is used in the electric-field update, which ensures charge conservation (as shown below), and a standard approximation of the current from the particles is used for the $r$ update.

We first justify that $\bar{J}_{i+1/2}$ is still a second-order in time approximation to the exact current.
Consider the time interval $[t^n,t^{n+1}]$ and let $(x_p(t),v_p(t))$ denote the exact quantities with the initial data at $t^n$:
\begin{equation}
\dot{x}_p(t)=v_p(t),\quad
\dot{v}_p(t)=E(x_p(t)),\quad
x_p(t^n)=x_p^n,\quad v_p(t^n)=v_p^n.
\end{equation} 
Define the exact current $\tilde{J}_{i+1/2}(t) = \frac{1}{\Delta x}\sum_{p=1}^{N_p} \omega_p v_p(t) S^{m-1}(x_{i+1/2}-x_p(t))$. 
Since $(\Phi^{m-1})'(x)=\frac{1}{\Delta x}S^{m-1}(x)$, we have 
\begin{align}
\int_{t^n}^{t^{n+1}} \tilde J_{i+1/2}(t)\rd t &=\frac{1}{\Delta x}\sum_{p=1}^{N_p}\omega_p
\int_{t^n}^{t^{n+1}}v_p(t)
S^{m-1}(x_{i+1/2}-x_p(t))\rd{t}\\
&=\sum_{p=1}^{N_p}\omega_p\int_{t^n}^{t^{n+1}} \frac{1}{\Delta x}S^{m-1}(x_{i+1/2}-x_p)\dot{x}_p(t)\rd{t}\\
&=-\sum_{p=1}^{N_p}\omega_p
\int_{t^n}^{t^{n+1}}\frac{\rd{}}{\rd{t}}
\Phi^{m-1}(x_{i+1/2}-x_p(t))\rd{t}\\
&=\sum_{p=1}^{N_p}\omega_p\left[
\Phi^{m-1}(x_{i+1/2}-x_p^n)
-\Phi^{m-1}(x_{i+1/2}-x_p(t^{n+1}))\right].
\end{align}
Notice that $x_p(t^{n+1})
=x_p^n+\Delta t\,v_p^n+\frac{\Delta t^2}{2}E^n(x_p^n)+O(\Delta t^3)=x_p^{n+1}+O(\Delta t^3)$, therefore 
\begin{align}
\left|\Delta t\,\bar J_{i+1/2}-\int_{t^n}^{t^{n+1}}\tilde J_{i+1/2}(t)\rd t\right|
&\le \sum_{p=1}^{N_p} |\omega_p| \left|\Phi^{m-1}(x_{i+1/2}-x_p(t^{n+1}))-\Phi^{m-1}(x_{i+1/2}-x_p^{n+1})\right|\\
&\le C\sum_{p=1}^{N_p} |\omega_p| |x_p(t^{n+1})-x_p^{n+1}| = O(\Delta t)^3, \nonumber
\end{align}
where we used the Lipschitz continuity of $\Phi^{m-1}$ and $C$ is a constant independent of $\Delta t$.

\begin{proposition}
    Scheme VA-I preserves
\begin{itemize}
    \item the total modified energy $\tilde{\mathcal{E}}_T^n= \frac{1}{2}\sum_{p=1}^{N_p} \omega_p (v_p^n)^2 + (r^n)^2-C_0$ for all $n\in \mathbb{N}$;
    \item the Gauss law $E_{i+1/2}^{n}-E_{i-1/2}^{n}=\Delta x (\rho_i^n-1)$ for all $n\in \mathbb{N}$.
\end{itemize}

\end{proposition}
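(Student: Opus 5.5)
Both properties follow by induction on $n$, once we check that each is preserved over one step $n\to n+1$.

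\textbf{Energy.} Multiply the velocity update \eqref{VA1_v} by $\omega_p v_p^{n+1/2}$ and sum over $p$. Since $(v_p^{n+1})^2-(v_p^n)^2 = 2v_p^{n+1/2}(v_p^{n+1}-v_p^n)$, the left side is the kinetic energy increment. The right side is $\Delta t\sum_p \omega_p v_p^{n+1/2} E^{n+1/2}(x_p^{n+1/2})$. Inserting the interpolation formula and exchanging the sums over $p$ and $i$ gives
\begin{equation*}
\frac12\sum_p\omega_p\big[(v_p^{n+1})^2-(v_p^n)^2\big]=\Delta t\,\Delta x\sum_i E_{i+1/2}^{n+1/2}J_{i+1/2}^{n+1/2}.
\end{equation*}
Here we use that $S^m$ is even, so $S^m(x_p^{n+1/2}-x_{i+1/2})=S^m(x_{i+1/2}-x_p^{n+1/2})$. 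This is exactly why the $r$-update is built from $J^{n+1/2}$ evaluated at the midpoint positions and velocities.

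Next, multiply \eqref{VA1_r} by $2r^{n+1/2}=r^n+r^{n+1}$. This gives $(r^{n+1})^2-(r^n)^2=-\Delta t\Delta x\sum_i E^{n+1/2}_{i+1/2}J^{n+1/2}_{i+1/2}$. Adding the two identities yields $\tilde{\mathcal E}_T^{n+1}=\tilde{\mathcal E}_T^n$.

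The field update uses $\bar J$, which never enters this computation. This decoupling is the whole point of the SAV variable. The only caveat is that $r^{n+1/2}\neq0$ is needed for \eqref{VA1_r} to make sense. Since $r^{n+1}$ appears implicitly, we will note that \eqref{VA1_r} is a quadratic equation in $r^{n+1}$, and that the energy identity holds for any solution of it.

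\textbf{Gauss law.} Assume $E^n_{i+1/2}-E^n_{i-1/2}=\Delta x(\rho^n_i-1)$. By \eqref{VA1_E},
\begin{equation*}
(E^{n+1}_{i+1/2}-E^{n+1}_{i-1/2})-(E^{n}_{i+1/2}-E^{n}_{i-1/2}) = -\Delta t(\bar J_{i+1/2}-\bar J_{i-1/2}),
\end{equation*}
so it suffices to show the discrete continuity equation
\begin{equation*}
\Delta x(\rho_i^{n+1}-\rho_i^n)=-\Delta t(\bar J_{i+1/2}-\bar J_{i-1/2}).
\end{equation*}
This is the main, though elementary, step, and we will do it particle by particle.

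By \eqref{cur_VA-I}, the contribution of particle $p$ to $\Delta t(\bar J_{i+1/2}-\bar J_{i-1/2})$ is $\omega_p[G(x_p^n)-G(x_p^{n+1})]$, where
\begin{equation*}
G(x)=\Phi^{m-1}(x_{i+1/2}-x)-\Phi^{m-1}(x_{i-1/2}-x).
\end{equation*}
By the primitive formula \eqref{Sm-1_prim}, $G(x)=\frac1{\Delta x}\int_{x_{i-1/2}-x}^{x_{i+1/2}-x}S^{m-1}(y)\,dy$. The change of variables $y=x_i-x+z$ turns this into $\frac1{\Delta x}\int_{-\Delta x/2}^{\Delta x/2}S^{m-1}(x_i-x+z)\,dz$, which equals $S^m(x_i-x)$ by the recursive definition of the B-spline and the evenness of $S^{m-1}$. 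Hence the contribution is $\omega_p[S^m(x_i-x_p^n)-S^m(x_i-x_p^{n+1})]$. Summing over $p$ and dividing by $\Delta x$ gives $-(\rho_i^{n+1}-\rho_i^n)$, which is the desired identity.

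Periodicity has to be handled in this step. Because $\Phi^{m-1}$ is not periodic, we interpret $\Phi^{m-1}(x_{i+1/2}-x_p)$ with $x_p$ taken as an unwrapped trajectory over the step, or equivalently as a sum over periodic images. The argument above is unaffected, because only the difference $G$ enters, and $G$ is compactly supported and periodizes consistently.
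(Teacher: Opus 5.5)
Your proof is correct. The energy part coincides with the paper's: you test the velocity update with $\omega_p v_p^{n+1/2}$ and the $r$ update with $r^n+r^{n+1}$, then use the evenness of $S^m$ to identify both right-hand sides with $\pm\Delta t\,\Delta x\sum_i E^{n+1/2}_{i+1/2}J^{n+1/2}_{i+1/2}$.

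For the Gauss law you take a more direct route than the paper. The paper introduces a time-continuous interpolating trajectory $x_p(t)$ and differentiates $\rho_i(t)$ using \eqref{SmSm-1}. It then converts the time integral into a spatial one via $\dot x_p\,\mathrm{d}t=\mathrm{d}x_p$, and only after that recognizes the primitives $\Phi^{m-1}$. You instead prove the static identity $\Phi^{m-1}(x_{i+1/2}-x)-\Phi^{m-1}(x_{i-1/2}-x)=S^m(x_i-x)$ directly from the B-spline recursion, and evaluate it at $x_p^n$ and $x_p^{n+1}$. This identity is just the integrated form of \eqref{SmSm-1}, and the evenness of $S^{m-1}$ is not actually needed for it.

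What each approach buys: your version shows that the discrete continuity equation is an exact algebraic identity for arbitrary endpoint positions, with no auxiliary path and no regularity assumption on one. The paper's version has the benefit of exhibiting $\bar J_{i+1/2}$ as the time average of the current along a trajectory, i.e.\ the Villasenor--Buneman interpretation. That interpretation is also what underlies the paper's second-order consistency argument for $\bar J$.

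Your periodicity remark addresses something the paper leaves implicit. To make it precise, periodize the difference $\Phi^{m-1}(c-x_p^n)-\Phi^{m-1}(c-x_p^{n+1})$, which is compactly supported in $c$, computed with the unwrapped $x_p^{n+1}=x_p^n+\Delta t\,v_p^*$. Do not periodize $\Phi^{m-1}$ itself: a sum of $\Phi^{m-1}$ over periodic images does not converge.
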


\begin{proof}
For the kinetic energy, we multiply \eqref{VA1_v} by $\sum_{p=1}^{N_p} v_p^{n+1/2}$ to obtain
\begin{align}
    \mathcal{E}_K^{n+1}-\mathcal{E}_K^n&=\frac{1}{2}\sum_{p=1}^{N_p} \omega_p((v_p^{n+1})^2-(v_p^n)^2)=\sum_{i=0}^{N_x-1}\sum_{p=1}^{N_p}\omega_p v_p^{n+1/2} \Delta t E_{i+1/2}^{n+1/2} S^m(x_p^{n+1/2}-x_{i+1/2}).
\end{align}
For the  modified electric energy, we multiply \eqref{VA1_r} by $(r^n+r^{n+1})$ to obtain
\begin{align}
    \tilde{\mathcal{E}}_E^{n+1}-\tilde{\mathcal{E}}_E^n&=(r^{n+1})^2-(r^n)^2=-\sum_{i=0}^{N_x-1}\sum_{p=1}^{N_p}\omega_p v_p^{n+1/2} \Delta t E_{i+1/2}^{n+1/2} S^m(x_p^{n+1/2}-x_{i+1/2}).
\end{align}
Add both, we see that the total modified energy $\tilde{\mathcal{E}}_T^{n}=\mathcal{E}_K^n+\tilde{\mathcal{E}}_E^n$ is preserved.

To show the charge conservation, we assume that $\rho_i(t), x_p(t)$ are time-continuous functions satisfying $$\rho_i(t^n)=\rho_i^n,\quad \rho_i(t^{n+1})=\rho_i^{n+1},\quad x_p(t^n) = x_p^n, \quad x_p(t^{n+1})=x_p^{n+1},$$ 
then we have
\begin{align}
    \label{Charge1}
  \frac{\rho_i^{n+1}-\rho_i^{n}}{ \Delta t} &= \frac{1}{ \Delta t}\int_{t^n}^{t^{n+1}}\frac{\rd{\rho_i(t)}}{\rd{t}}\rd{t} \nonumber \\
 &=\frac{1}{\Delta x \Delta t}\int_{t^n}^{t^{n+1}}\sum_{p=1}^{N_p} \omega_p (-\dot{x}_p(t)) (S^m)'(x_i-x_p(t)) \rd{t} \nonumber\\
 &=\frac{1}{(\Delta x)^2\Delta t}\int_{t^n}^{t^{n+1}}\sum_{p=1}^{N_p} \omega_p \dot{x}_p(t)\Big(S^{m-1}(x_{i-1/2}-x_p(t))-S^{m-1}(x_{i+1/2}-x_p(t))\Big) \rd{t} \nonumber\\
&=\frac{1}{(\Delta x)^2\Delta t} \sum_{p=1}^{N_p} \int_{x^n_p}^{x^{n+1}_p} \omega_p \Big(S^{m-1}(x_{i-1/2}-x_p)-S^{m-1}(x_{i+1/2}-x_p)\Big)\rd{x_p}\nonumber\\
&=\nonumber\frac{-1}{(\Delta x)^2\Delta t} \sum_{p=1}^{N_p} \omega_p \left(\int_{x_{i-1/2}-x^n_p}^{x_{i-1/2}-x^{n+1}_p}S^{m-1}(y)\rd{y}-\int_{x_{i+1/2}-x^n_p}^{x_{i+1/2}-x^{n+1}_p}S^{m-1}(y)\rd{y}\right) \nonumber\\
    &=\frac{1}{\Delta x \Delta t}\sum_{p=1}^{N_p} \omega_p [\Phi^{m-1}(x_{i-1/2}-x_p^n)-\Phi^{m-1}(x_{i-1/2}-x_p^{n+1})\nonumber\\& \quad -\Phi^{m-1}(x_{i+1/2}-x_p^n)+\Phi^{m-1}(x_{i+1/2}-x_p^{n+1})]\nonumber\\
&=\nonumber\frac{\bar{J}_{i-1/2}-\bar{J}_{i+1/2}}{\Delta x}=\frac{1}{\Delta x}\left(\frac{E^{n+1}_{i+1/2}-E^{n+1}_{i-1/2}}{\Delta t}-\frac{E^{n}_{i+1/2}-E^{n}_{i-1/2}}{\Delta t}\right),
\end{align}
where we used \eqref{SmSm-1} in the third line and \eqref{Sm-1_prim} in the second last line above. Therefore, if the Gauss law $E_{i+1/2}^{n}-E_{i-1/2}^{n}=\Delta x (\rho_i^n-1)$ is satisfied at time $t^n$, then it is also satisfied at $t^{n+1}$, which means the charge is preserved.
\end{proof}

\subsection{Scheme VA-II}
\label{subsec: scheme VA-II}

We now introduce a different SAV scheme for the VA system, VA-II, which uses a scaled electric field $g(t,x):=\frac{E(t,x)}{r(t)}$ to first change the system \eqref{VA_SAV} to 
\begin{subequations}
\label{VA_SAV_rg}
\begin{align}
&\partial_t f + v\partial_x f + rg \partial_v f = 0, \\
\label{VA_SAV_rg_g}
&\partial_t (rg) = -J \longrightarrow r \partial_t g=-J+\frac{1}{2}g\int_0^Lg J\rd{x}, \\
\label{VA_SAV_rg_r}
&\frac{\rd{r}}{\rd{t}} = - \frac{1}{2}\int_0^L gJ\rd{x} .
\end{align}
\end{subequations}
A particle discretization for this system reads:
\begin{subequations}
    \label{VA_sd_rg}
\begin{align}
    \dot{x}_p(t) &= v_p(t), \\
    \dot{v}_p(t) &=r(t)g(x_p(t)), \\
    r(t)\dot{g}_{i+1/2}(t)&=-J_{i+1/2}(t)+\frac{1}{2}g_{i+1/2}(t)\left(\sum_{j=0}^{N_x-1} g_{j+1/2}(t) J_{j+1/2}(t) \Delta x\right),\\
    \dot{r}(t) &= - \frac{1}{2}\sum_{i=0}^{N_x-1} g_{i+1/2}(t) J_{i+1/2}(t) \Delta x,
\end{align}
\end{subequations}
where $g_{i+1/2}$ is defined analogously to $J_{i+1/2}$ in \eqref{def:rho_J}, and $g(x_p)$ is defined analogously to $E(x_p)$ in \eqref{E_part}.
To obtain a second-order time discretization, we introduce a time splitting scheme for \eqref{VA_sd_rg}, which consists of the following three subsystems:
\begin{subequations}
\label{VA_splitting}
    \begin{align}
        \varphi_{\Delta t}^{[1]}&\begin{cases}
        \label{VA_sub1}
            \dot{x}_p(t)=v_p(t),\\
             \dot{v}_p(t)=0,\\
            r(t)\dot{g}_{i+1/2}(t)= -J_{i+1/2}(t),\\
            \dot{r}(t)=0,
        \end{cases}\\
        \label{VA_sub2}
        \varphi_{\Delta t}^{[2]} &\begin{cases}
        \dot{x}_p(t)=0,\\
        \dot{v}_p(t)=0,\\
        r(t) \dot{g}_{i+1/2} (t)=\frac{1}{2}g_{i+1/2}(t)\left(\sum_{j=0}^{N_x-1} g_{j+1/2}(t) J_{j+1/2}(t) \Delta x\right),\\
        \dot{r}(t)=0,
        \end{cases}\\
        \varphi_{\Delta t}^{[3]}&\begin{cases}
        \label{VA_sub3}
            \dot{x}_p(t)=0,\\
            \dot{v}_p(t)=r(t)g(x_p(t)),\\
            \dot{g}_{i+1/2}(t)=0,\\
         \dot{r}(t)= - \frac{1}{2}\sum_{i=0}^{N_x-1} g_{i+1/2}(t)J_{i+1/2}(t) \Delta x.
        \end{cases}
    \end{align}
\end{subequations}

We now describe a second-order scheme for each subsystem. We use the superscript $(-)$ to denote the input to a subsystem and the superscript $(+)$ to denote its output.

\subsubsection{First subsystem: $\varphi_{\Delta t}^{[1]}$}
We discretize \eqref{VA_sub1} as
\begin{subequations}
\label{VA-II_sub1}
\begin{align}
    x_p^+-x_p^- & = \Delta t v_p^{-}, \\
    r^- (g_{i+1/2}^+-g_{i+1/2}^- )&  = - \Delta t \bar{J}_{i+1/2}, 
\end{align}
\end{subequations}
where
\begin{align}
\label{VA-II-J}
    \bar{J}_{i+1/2}=\frac{1}{\Delta t}\sum_{p=1}^{N_p} \omega_p [\Phi^{m-1}(x_{i+1/2}-x_p^-)-\Phi^{m-1}(x_{i+1/2}-x_p^+)].
\end{align}

\subsubsection{Second subsystem: $\varphi_{\Delta t}^{[2]}$}

In this subsystem, all components of $g$ have the same scalar growth rate, and we can solve it exactly. Set
\begin{equation}
    g_{i+1/2}(t)=\alpha(t)g_{i+1/2}^-,
    \qquad \alpha(0)=1.
\end{equation}
It follows that
\begin{equation}
\label{alpha_g}
    \sum_{j=0}^{N_x-1} g_{j+1/2}(t)J_{j+1/2}^-\Delta x=\alpha(t)\sum_{i=0}^{N_x-1} g_{i+1/2}^-J_{i+1/2}^-\Delta x,
\end{equation}
where
\begin{equation}
J_{i+1/2}^-=\frac{1}{\Delta x} \sum_{p=1}^{N_p} \omega_p v_p^- S^m (x_{i+1/2} - x_p^-).
\end{equation}
Substituting \eqref{alpha_g} into \eqref{VA_sub2} gives
\begin{equation}
    \dot{\alpha}(t)=\frac{\sum_{j=0}^{N_x-1} g_{j+1/2}^-J_{j+1/2}^-\Delta x}{2r^-}\alpha^2(t),
\end{equation}
whose solution is
\begin{equation}
    \frac{1}{\alpha(t)}=1-\frac{t}{2r^-}\sum_{j=0}^{N_x-1}g_{j+1/2}^-J_{j+1/2}^-\Delta x,
\end{equation}
hence 
\begin{equation}
    g_{i+1/2}^+
    =\frac{g_{i+1/2}^-}
    {1-\dfrac{\Delta t}{2r^-}\sum_{j=0}^{N_x-1}g_{j+1/2}^-J_{j+1/2}^-\Delta x}.
    \label{VA_scaling_update}
\end{equation}

\subsubsection{Third subsystem: $\varphi_{\Delta t}^{[3]}$}
\label{sub:233}

We discretize \eqref{VA_sub3} as
\begin{subequations}
\begin{align}
\label{eq:vplus}
  v_p^+-v_p^- &= \Delta t r^\text{mid} g_p^-,\\
  \label{eq:rplus}
  r^+-r^- &= -\frac{\Delta t}{2}\sum_{i=0}^{N_x-1} g_{i+1/2}^-J_{i+1/2}^\text{mid}\Delta x\left(=-\frac{\Delta t}{2}\sum_{p=1}^{N_p} \omega_p v_p^{\text{mid}}g_p^-\right),
\end{align}
\end{subequations}
where
\begin{align}
r^\text{mid}&=\frac{r^++r^-}{2},\quad  J_{i+1/2}^\text{mid} = \frac{1}{\Delta x} \sum_{p=1}^{N_p} \omega_p v_p^\text{mid} S^m (x_{i+1/2} - x_p^-), \\
 g_p^-&=\sum_{i=0}^{N_x-1} g_{i+1/2}^- S^m(x_p^--x_{i+1/2}), \quad v_p^\text{mid}=\frac{v_p^++v_p^-}{2}.
\end{align}
Note that $r^{\text{mid}}$ can be solved using
\begin{equation}
r^{\text{mid}}=\frac{r^--\frac{\Delta t}{4}\sum_{p=1}^{N_p} \omega_p g_p^- v_p^-}{1+\frac{\Delta t^2}{8}\sum_{p=1}^{N_p} \omega_p (g_p^-)^2}.
\end{equation}

\subsubsection{Full scheme}
Denote $U(t) = (x_p, v_p, r, g_{i+1/2})$ for $p=1, \dots, N_p$ and $i=0, \dots, N_x-1$, and let $U^n\approx U(t_n)$. A symmetric second-order composition is given by
\begin{equation}
    U^{n+1}=\varphi_{\Delta t/2}^{[1]}
    \circ\varphi_{\Delta t/2}^{[2]}
    \circ\varphi_{\Delta t}^{[3]}
    \circ\varphi_{\Delta t/2}^{[2]}
    \circ\varphi_{\Delta t/2}^{[1]}(U^n).
    \label{VA_Strang}
\end{equation}

Note that in this scheme, the electric field $E$ is updated alongside, but independent of the scheme, as
\begin{align}
\label{VA-II_current}
    E_{i+1/2}^{n+1} - E_{i+1/2}^{n} = - \Delta t \bar{J}_{i+1/2}, 
\end{align}
where 
\begin{align}
    \bar{J}_{i+1/2}=\frac{1}{\Delta t}\sum_{p=1}^{N_p} \omega_p [\Phi^{m-1}(x_{i+1/2}-x_p^n)-\Phi^{m-1}(x_{i+1/2}-x_p^{n+1})].
\end{align}

\begin{proposition}
    Scheme VA-II preserves
    \begin{itemize}
    \item the total modified energy $\tilde{\mathcal{E}}_T^n= \frac{1}{2}\sum_{p=1}^{N_p} \omega_p (v_p^n)^2 + (r^n)^2-C_0$ for all $n\in \mathbb{N}$;
    \item the Gauss law $E_{i+1/2}^{n}-E_{i-1/2}^{n}=\Delta x (\rho_i^n-1)$ for all $n\in \mathbb{N}$.
\end{itemize}
\end{proposition}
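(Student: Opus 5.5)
Both invariants are handled flow by flow through the Strang composition \eqref{VA_Strang}. Since the total modified energy is a function of $(v_p,r)$ only, it suffices to show that each of the discrete maps $\varphi^{[1]}$, $\varphi^{[2]}$, $\varphi^{[3]}$ preserves $\frac12\sum_p\omega_p v_p^2+r^2$. The Gauss law involves only $E$ and the particle positions. Positions change only in the two half steps $\varphi^{[1]}_{\Delta t/2}$, and $E$ is updated by \eqref{VA-II_current} using the net displacement from $x_p^n$ to $x_p^{n+1}$. So the charge argument from the proof for Scheme VA-I can be reused almost verbatim.

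\textbf{Energy.} In $\varphi^{[1]}$ and $\varphi^{[2]}$, both $v_p$ and $r$ are unchanged, so the energy is trivially preserved; only $x_p$ and $g$ move, and $g$ does not enter the energy. The real work is in $\varphi^{[3]}$. Multiply \eqref{eq:vplus} by $\omega_p v_p^{\mathrm{mid}}$ and sum over $p$:
\[
\tfrac12\sum_p\omega_p\big((v_p^+)^2-(v_p^-)^2\big)=\Delta t\, r^{\mathrm{mid}}\sum_p\omega_p v_p^{\mathrm{mid}}g_p^-.
\]
Next, multiply \eqref{eq:rplus} by $r^++r^-=2r^{\mathrm{mid}}$. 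This requires the parenthetical identity in \eqref{eq:rplus}, namely
\[
\sum_i g^-_{i+1/2}J^{\mathrm{mid}}_{i+1/2}\Delta x=\sum_p\omega_p v_p^{\mathrm{mid}}g_p^-.
\]
That identity follows by interchanging the $i$ and $p$ sums and using the evenness $S^m(x_{i+1/2}-x_p^-)=S^m(x_p^--x_{i+1/2})$, which holds because both $J^{\mathrm{mid}}$ and $g_p^-$ are evaluated at the same frozen positions $x_p^-$. The result is
\[
(r^+)^2-(r^-)^2=-\Delta t\, r^{\mathrm{mid}}\sum_p\omega_p v_p^{\mathrm{mid}}g_p^-,
\]
which exactly cancels the kinetic change. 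Composing the five substeps then gives $\tilde{\mathcal E}_T^{n+1}=\tilde{\mathcal E}_T^n$. Here I would note that the explicit formula for $r^{\mathrm{mid}}$ is merely the solution of the coupled linear system \eqref{eq:vplus}--\eqref{eq:rplus}, so the identities above hold exactly.

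\textbf{Gauss law.} Over one full step, $x_p^{n+1}=x_p^n+\tfrac{\Delta t}{2}v_p^n+\tfrac{\Delta t}{2}v_p^{(3)}$, where $v_p^{(3)}$ is the velocity after $\varphi^{[3]}$. What matters is only that $\rho^n_i$ and $\rho^{n+1}_i$ are computed from $x_p^n$ and $x_p^{n+1}$. Take any continuous path $x_p(t)$ joining $x_p^n$ to $x_p^{n+1}$, for instance the piecewise linear one through the intermediate position. Then the computation \eqref{Charge1} from the preceding proof applies unchanged. It uses \eqref{SmSm-1} and the fundamental theorem of calculus via \eqref{Sm-1_prim}, giving
\[
\rho_i^{n+1}-\rho_i^n=\frac{\Delta t}{\Delta x}\big(\bar J_{i-1/2}-\bar J_{i+1/2}\big),
\]
because the integral of $S^{m-1}$ along the path depends only on its endpoints. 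By \eqref{VA-II_current} this equals $\frac{1}{\Delta x}$ times the change in $E^{n+1}_{i+1/2}-E^{n+1}_{i-1/2}$ relative to time $n$. The Gauss law at $t^0$, which holds by the Poisson construction \eqref{VA_Poisson}, then propagates by induction.

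The main obstacle is the $\varphi^{[3]}$ energy cancellation, specifically making sure that the $r$-update uses the current built from $v^{\mathrm{mid}}$ at the same frozen positions $x_p^-$ at which $g_p^-$ is interpolated. I expect the rest to be routine bookkeeping.
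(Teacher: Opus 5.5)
Your proposal is correct and follows the same route as the paper. Energy conservation reduces to the midpoint cancellation in $\varphi^{[3]}_{\Delta t}$, since $\varphi^{[1]}$ and $\varphi^{[2]}$ leave $v_p$ and $r$ unchanged, and the Gauss law is inherited verbatim from the VA-I argument because $E$ is updated by the same Villasenor--Buneman current built from $x_p^n$ and $x_p^{n+1}$. Your extra checks only make explicit what the paper leaves implicit: multiplying by $r^++r^-=2r^{\mathrm{mid}}$ rather than $r^{\mathrm{mid}}$, verifying the sum-swap identity in the $r$-update, and computing the net displacement over the composition.
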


\begin{proof}

For the energy conservation, since $v_p$ and $r$ are not updated in $\varphi_{\Delta t}^{[1]}$ and $\varphi_{\Delta t}^{[2]}$, we only need to check if $\varphi_{\Delta t}^{[3]}$ preserves the total modified energy. Indeed, multiplying \eqref{eq:vplus} by $v_p^{\text{mid}}$ and multiplying \eqref{eq:rplus} by $r^{\text{mid}}$ yield, respectively,
\begin{align}
    \mathcal{E}_K^+-\mathcal{E}_K^-&=\frac{1}{2}\sum_{p=1}^{N_p} \omega_p((v_p^{+})^2-(v_p^-)^2)=\sum_{i=0}^{N_x-1}\sum_{p=1}^{N_p}\omega_p v_p^\text{mid}\Delta t r^\text{mid} g_{i+1/2}^- S^m(x_p^--x_{i+1/2}),\\
        \tilde{\mathcal{E}}_E^+-\tilde{\mathcal{E}}_E^-&=(r^{+})^2-(r^-)^2=-\sum_{i=0}^{N_x-1}\sum_{p=1}^{N_p}\omega_p v_p^\text{mid}\Delta t r^\text{mid} g_{i+1/2}^- S^m(x_p^--x_{i+1/2}).
\end{align}
Adding the two equations gives $\tilde{\mathcal{E}}_T^+ = \tilde{\mathcal{E}}_T^-$, so the total modified energy is preserved.

Since the scheme VA-II shares the same $E$ update as the scheme VA-I, charge conservation can be proved exactly the same as before.
\end{proof}

\begin{remark}
The two schemes, VA-I and VA-II, introduced above differ significantly. VA-I updates the original variables $(x_p,v_p,E_{i+1/2},r)$, but the update of $r$ is essentially decoupled from the rest of the scheme, whereas VA-II updates the transformed  variables $(x_p,v_p,g_{i+1/2}),r$, but the update of $E$ is decoupled from the main loop. 
\end{remark}

\begin{remark}
Other time integrators can be used for the approximation of the semi-discretized system 
\begin{align*}
\begin{cases}
         \dot{x}_p(t) = v_p(t),\\
         \dot{v}_p(t) = E(t, x_p(t)) = r(t) g(t, x_p(t)), \\
         \dot{E}_{i+1/2}(t) = -{J}_{i+1/2}(t),\\
         \dot{r}(t) = -\frac{1}{2} \sum_{i=0}^{N_x} g_{i+1/2}(t) J_{i+1/2}(t) \Delta x. 
\end{cases}
\end{align*}
Indeed, to ensure the preservation of both charge and modified energy, the following splitting can be used  
 \begin{align}
 \label{va0_1}
     \varphi^{[1]}_{t} \quad &\begin{cases}
         \dot{x}_p(t) = v_p(t),\\
         \dot{E}_{i+1/2}(t) = -{J}_{i+1/2}(t),
     \end{cases}\\
     \label{va0_2}
     \varphi^{[2]}_t \quad &\begin{cases}
         \dot{v}_p(t) = r g_p(t),\\
         \displaystyle \dot{r}(t)
         =-\frac{1}{2}\sum_i g_{i+1/2}(t)J_{i+1/2}(t)\Delta x.  
     \end{cases}
 \end{align}
 To get second order accuracy in time, we use a Strang splitting 
 $\varphi^{[1]}_{\Delta t/2}\circ \varphi^{[2]}_{\Delta t}\circ \varphi^{[1]}_{\Delta t/2}$ combined with a midpoint scheme for \eqref{va0_2}. More precisely, for $\varphi^{[1]}_{\Delta t/2}$, we use the charge preserving technique 
 \begin{eqnarray*}
 x^+_p &=& x^-_p + \frac{\Delta t}{2} v_p^-, \;\;\;\; 
 {E}_{i+1/2}^+ = {E}_{i+1/2}^- -\frac{\Delta t}{2} \bar{J}_{i+1/2}^{\textrm{mid}},\nonumber\\
 \bar{J}_{i+1/2}^{\textrm{mid}} &=& \frac{1}{\Delta t}\sum_{p=1}^{N_p} \omega_p \Big[ \Phi^{m-1}(x_{i+1/2}-x_p^-) - \Phi^{m-1}(x_{i+1/2}-x_p^+) \Big]\nonumber. 
 \end{eqnarray*}
As a second order approximation of $g_{i+1/2}(t)$ (and $g_p(t)$) in $\varphi^{[2]}_t$, we compute $g^\star_{i+1/2}$ which denotes any first order approximation of $g_{i+1/2}(t^{n+1/2}) = E_{i+1/2}(t^{n+1/2})/r(t^{n+1/2})$, for example 
\begin{itemize}
    \item $g^\star_{i+1/2} = \frac{E^{+}_{i+1/2}}{r^- -\frac{\Delta t}{4}\sum_{i=0}^{N_x-1} g^-_{i+1/2}J^-_{i+1/2}\Delta x}$, 
        \item $g^\star_{i+1/2}= \frac{3}{2}g^n_{i+1/2} - \frac{1}{2} g^{n-1}_{i+1/2}$,  in the spirit of  \cite{sav_schro, sav_xavier}, 
    \item $g^\star_{i+1/2} = g^-_{i+1/2} + \frac{\Delta t}{2}\dot{g}^-_{i+1/2}  = g^-_{i+1/2} + \frac{\Delta t}{2}\Big( -\frac{J^-_{i+1/2}}{r^-_{i+1/2}} +\frac{g^-_{i+1/2}}{2r^-_{i+1/2}}\sum_{i=0}^{N_x-1} g^-_{i+1/2} J^-_{i+1/2} \Delta x\Big)$, using \eqref{VA_SAV_rg_g}. 
\end{itemize}  
Once $g^\star_{i+1/2}$ is computed,  
$\varphi^{[2]}_{\Delta t}$ is updated using the midpoint method to ensure modified energy preservation 
 \begin{eqnarray*}
  v_p^{+} &=& v_p^- + \Delta t r^{\textrm{mid}} g^\star_p, \;\;\;\; 
 r^{+} = r^- - \frac{\Delta t}{2}\sum_{p=1}^{N_p}\sum_{i=0}^{N_x-1} \omega_p g_{p}^\star v_p^{\textrm{mid}} S^m(x_{i+1/2}-x_p^-)\Delta x,\nonumber 
 \end{eqnarray*}
with $r^{\textrm{mid}} = (r^-+r^{+})/2$ and $v_p^{\textrm{mid}} = (v_p^-+v_p^{+})/2$,  
 which is solved as in Subsection \ref{sub:233}. 

For this second order time integrator, one can also prove charge preservation thanks to the current $\bar{J}_{i+1/2}$ in $\varphi^{[1]}_{t}$ and preservation of the modified total energy thanks to the midpoint method in $\varphi^{[2]}_{t}$, as for VA-I and VA-II. Of course, other time discretization may be derived from the SAV semi-discretization. We restricted ourselves to the presentation of VA-I and VA-II within this manuscript. 
\end{remark}

\section{Electromagnetic Vlasov-Maxwell system}

In this section, we consider the VM system in 1D2V,
\begin{subequations}
\begin{align}
    \label{VM_f}
    \partial_t f + v_x\partial_x f +   (E_x + Bv_y)\partial_{v_x} f & + (E_y - Bv_x)\partial_{v_y} f = 0,\\
    \label{VM_Ex}
    \partial_t E_x & = -J_x, \quad J_x = \int_{\mathbb{R}^2} v_x f \rd{\boldsymbol{v}}, \\
    \label{VM_Ey}
    \partial_t E_y & = -J_y - \partial_x B, \quad J_y = \int_{\mathbb{R}^2} v_y f \rd{\boldsymbol{v}}, \\
    \label{VM_B}
    \partial_t B & = -\partial_x E_y, 
\end{align}
\end{subequations}
where $f(t,x,v_x,v_y)$ is the distribution function with $x\in [0, L]$ and $\boldsymbol{v}= (v_x, v_y) \in \mathbb{R}^2$. We again assume periodic boundary condition in $x$ and vanishing boundary in $\boldsymbol{v}$. The transverse fields $E_y(0,x)$ and $B(0,x)$ are set to zero unless specified otherwise. The electric field $E_x(0,x)$ is constrained by Gauss's law initially. \eqref{VM_f} and \eqref{VM_Ex} then propagate Gauss's law for all later times. As in the VA system, it is challenging to preserve both the total energy and the charge in the discretized VM system, and we introduce the SAV reformulation to achieve this goal.

\subsection{SAV formulation and the particle method}

As in \cite{crouseilles2026}, we introduce two SAVs $r_{x,y}^2(t)=\frac{1}{2}\int_0^L E_{x,y}^2(t, x) \rd{x}+C_0$, where $C_0$ is a constant chosen to ensure that the right-hand side is always positive. The VM system is then reformulated as
\begin{subequations}
    \label{VM_SAV}
\begin{align}
    \label{VM_SAV_f}
    \partial_t f &+ v_x\partial_x f +   (E_x + Bv_y)\partial_{v_x} f  + (E_y - Bv_x)\partial_{v_y} f = 0,\\
    \label{VM_SAV_Ex}
    \partial_t E_x & = -J_x, \\
    \label{VM_SAV_Ey}
    \partial_t E_y & = -J_y - \partial_x B, \\
    \label{VM_SAV_B}
    \partial_t B & = -\partial_x E_y, \\
    \label{VM_SAV_rx}
    \dot{r}_x & = - \frac{1}{2r_x} \int_0^L E_x J_x \rd{x}, \\
    \label{VM_SAV_ry}
    \dot{r}_y & = - \frac{1}{2r_y} \int_0^L E_y (J_y + \partial_x B) \rd{x}.
\end{align}
\end{subequations}

Similarly to the VA system, we use the particle approximation
\begin{equation}
    f(t,x,v_x,v_y)\approx \sum_{p=1}^{N_p} \omega_p \delta(x-x_p(t))\delta(v_x-v_{x,p}(t))\delta(v_y-v_{y,p}(t)),
\end{equation}
and divide the spatial domain into $N_x$ uniform cells of size $\Delta x = L/N_x$. On a Yee's lattice  \cite{yee}, $E_x, J_x, B$ are defined at the cell centers $x_{i+1/2}=(i+1/2)\Delta x$, while $E_y, J_y$, $\rho$ 
are defined at the cell interfaces $x_i=i\Delta x$, $i=0,\dots,N_x-1$. $E_x(0,x)$ is initialized by the same discrete Gauss-law/Poisson construction as in \eqref{VA_Poisson}, using the charge density deposited from the particles as in \eqref{def:rho_J}. The system \eqref{VM_SAV} is then approximated using particles as
\begin{subequations}
\label{VM_semidiscrete}    
\begin{align}
    \label{VM_sd_x}
    \dot{x}_p & = v_{x,p}, \\
    \label{VM_sd_vx}
    \dot{v}_{x,p} & =E_{x,p} + B_p v_{y,p}, \\
    \label{VM_sd_vy}
    \dot{v}_{y,p} & =E_{y,p} - B_p v_{x,p}, \\
    \label{VM_sd_Ex}
    \dot{E}_{x,i+1/2} & = -J_{x,i+1/2}, \\
    \label{VM_sd_Ey}
    \dot{E}_{y,i} & = -J_{y,i} - \frac{B_{i+1/2}-B_{i-1/2}}{\Delta x}, \\
    \label{VM_sd_B}
    \dot{B}_{i+1/2} & = -\frac{E_{y,i+1}-E_{y,i}}{\Delta x}, \\
    \label{VM_sd_rx}
    \dot{r}_x & = - \frac{1}{2r_x} \sum_{i=0}^{N_x-1} E_{x,i+1/2} J_{x,i+1/2} \Delta x, \\
    \label{VM_sd_ry}
    \dot{r}_y & = - \frac{1}{2r_y} \sum_{i=0}^{N_x-1} E_{y,i} \left(J_{y,i} + \frac{B_{i+1/2}-B_{i-1/2}}{\Delta x}\right) \Delta x.
\end{align}
\end{subequations}
We use the same B-spline shape functions for the particle-mesh coupling as in Subsection \ref{sec:SAV_VA}, specifically,
\begin{align}
    J_{x,i+1/2}&=\frac{1}{\Delta x}\sum_{p=1}^{N_p}\omega_pv_{x,p}
    S^m(x_{i+1/2}-x_p),&
    J_{y,i}&=\frac{1}{\Delta x}\sum_{p=1}^{N_p}\omega_pv_{y,p}
    S^m(x_i-x_p),\\
    E_{x,p}&=\sum_{i=0}^{N_x-1}E_{x,i+1/2}S^m(x_p-x_{i+1/2}),&
    E_{y,p}&=\sum_{i=0}^{N_x-1}E_{y,i}S^m(x_p-x_i),\\
    B_p&=\sum_{i=0}^{N_x-1}B_{i+1/2}S^m(x_p-x_{i+1/2}).
\end{align}

\subsection{Scheme VM-I}
\label{subsec:VM-I}
Similarly to VA-I, we can use predictors to obtain a second-order scheme, VM-I:
\begin{subequations}
\begin{align}
    v_{x,p}^* - v_{x,p}^n& =  \frac{\Delta t}{2} (E_{x,p}^n + v_{y,p}^* B_p^n), \\
    v_{y,p}^* - v_{y,p}^n& = \frac{\Delta t}{2} (E_{y,p}^n - v_{x,p}^* B_p^n), \\
    x^{n+1}_p-x^n_p& = \Delta t v_{x,p}^*, \\
    E_{x,i+1/2}^{n+1}-E_{x,i+1/2}^n& = - \Delta t \bar{J}_{x,i+1/2}^{n+1/2}, \\
    \label{VM1_Ey}
    E_{y,i}^{n+1}-E_{y,i}^n& =- \Delta t J_{y,i}^{*,n+1/2} - \frac{B^{n+1/2}_{i+1/2}-B^{n+1/2}_{i-1/2}}{\Delta x} \Delta t, \\
    \label{VM1_Bz}
    B^{n+1}_{i+1/2}-B^{n}_{i+1/2}   & = -\Delta t \frac{E_{y,i+1}^{n+1/2}-E_{y,i}^{n+1/2}}{\Delta x}, \\
    {v}^{n+1}_{x,p}-{v}^{n}_{x,p}   & =\Delta t E_{x,p}^{n+1/2}+ \Delta t  B_{p}^{n+1/2}v^{n+1/2}_{y,p}, \\
    {v}^{n+1}_{y,p}-{v}^{n}_{y,p}   & = \Delta t E_{y,p}^{n+1/2} - \Delta t B_{p}^{n+1/2}v^{n+1/2}_{x,p}, \\
    r_x^{n+1}-r_x^{n} & = - \frac{\Delta t}{2 r_x^{n+1/2}} \sum_{i=0}^{N_x-1}\sum_{p=1}^{N_p} \omega_p E_{x,i+1/2}^{n+1/2} v_{x,p}^{n+1/2} S^m(x_{i+1/2}-x_p^{n+1/2}), \\
    \label{VM1_ry}
    r_y^{n+1}-r_y^{n} & = - \frac{\Delta t}{2 r_y^{n+1/2}} \sum_{i=0}^{N_x-1}\sum_{p=1}^{N_p} \omega_p E_{y,i}^{n+1/2} v_{y,p}^{n+1/2} S^m(x_i - x_p^{n+1/2}) \nonumber\\
    &\quad - \frac{\Delta t}{2 r_y^{n+1/2}} \sum_{i=0}^{N_x-1} E_{y,i}^{n+1/2} (B^{n+1/2}_{i+1/2}-B^{n+1/2}_{i-1/2}),
\end{align}
\end{subequations}
where
\begin{align}
    \bar{J}_{x,i+1/2}^{n+1/2} & =\frac{1}{\Delta t}\sum_{p=1}^{N_p} \omega_p [\Phi^{m-1}(x_{i+1/2}-x_p^n)-\Phi^{m-1}(x_{i+1/2}-x_p^{n+1})], \\
    J_{y,i}^{*,n+1/2} & = \frac{1}{\Delta x} \sum_{p=1}^{N_p} \omega_p v_{y,p}^* S^m(x_{i}-x_p^{n+1/2}), \\
    x_p^{n+1/2}&=\frac{x_p^n+x_p^{n+1}}{2},\quad
    v_{x,p}^{n+1/2}=\frac{v_{x,p}^n+v_{x,p}^{n+1}}{2},\quad v_{y,p}^{n+1/2}=\frac{v_{y,p}^n+v_{y,p}^{n+1}}{2},\\
    r_x^{n+1/2}&=\frac{r_x^n+r_x^{n+1}}{2},\quad r_y^{n+1/2}=\frac{r_y^n+r_y^{n+1}}{2},\quad
    B_{i+1/2}^{n+1/2}=\frac{B_{i+1/2}^n+B_{i+1/2}^{n+1}}{2},\\
    E_{x,i+1/2}^{n+1/2}&=\frac{E_{x,i+1/2}^n+E_{x,i+1/2}^{n+1}}{2},\quad
    E_{y,i}^{n+1/2}=\frac{E_{y,i}^n+E_{y,i}^{n+1}}{2}.
\end{align}
Here $E_{x,p}^n$, $E_{y,p}^n$, and $B_p^n$ are the particle interpolants
defined above evaluated at $x_p^n$. The midpoint particle fields in the
velocity update are
\begin{align}
    E_{x,p}^{n+1/2}
    &=\sum_{i=0}^{N_x-1}E_{x,i+1/2}^{n+1/2}
      S^m(x_p^{n+1/2}-x_{i+1/2}), \quad 
    E_{y,p}^{n+1/2}
    =\sum_{i=0}^{N_x-1}E_{y,i}^{n+1/2}
      S^m(x_p^{n+1/2}-x_i),\\
    B_p^{n+1/2}
    &=\sum_{i=0}^{N_x-1}B_{i+1/2}^{n+1/2}
      S^m(x_p^{n+1/2}-x_{i+1/2}).
\end{align}
Unlike the VA case, the field equations \eqref{VM1_Ey}-\eqref{VM1_Bz} are not completely explicit. Nevertheless, the implicit electromagnetic field equations can be reformulated as a linear system
\begin{align}
    -kE_{y,i-1}^{n+1}+(1+2k)E_{y,i}^{n+1}-kE_{y,i+1}^{n+1}
    &= kE_{y,i-1}^{n} + (1-2k)E_{y,i}^{n} + kE_{y,i+1}^{n} \nonumber\\
    &\quad - \Delta t J_{y,i}^{*,n+1/2} - \frac{\Delta t}{\Delta x}(B^n_{i+1/2}-B^n_{i-1/2}),
\end{align}
where $k=\frac{\Delta t^2}{4\Delta x^2}$. This system can be solved at the cost of an explicit method. The implicitness also ensures the unconditional stability of the field solver.

\begin{proposition}
    Scheme VM-I preserves
    \begin{itemize}
    \item the total modified energy
    \begin{equation*}
    \begin{aligned}
    \tilde{\mathcal{E}}_T^n
    &=\frac{1}{2} \sum_{p=1}^{N_p} \omega_p [(v_{x,p}^n)^2 + (v_{y,p}^n)^2]
    + (r_x^n)^2 + (r_y^n)^2 - 2C_0  + \frac{1}{2} \sum_{i=0}^{N_x-1} (B_{i+1/2}^n)^2\Delta x
    \end{aligned}
    \end{equation*}
    for all $n\in \mathbb{N}$;
    \item the Gauss law $E_{x,i+1/2}^{n}-E_{x,i-1/2}^{n}=\Delta x (\rho_i^n-1)$ for all $n\in \mathbb{N}$.
    \end{itemize}
\end{proposition}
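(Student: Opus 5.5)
The proof follows the pattern of the VA-I proposition: an energy identity obtained by multiplying each update by its midpoint, and a Gauss-law argument that reuses the Villasenor--Buneman computation.

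\emph{Kinetic energy.} Multiply the $v_{x,p}$ update by $\omega_p v_{x,p}^{n+1/2}$ and the $v_{y,p}$ update by $\omega_p v_{y,p}^{n+1/2}$, then sum over $p$. The magnetic terms cancel exactly, since $B_p^{n+1/2}v_{y,p}^{n+1/2}v_{x,p}^{n+1/2}-B_p^{n+1/2}v_{x,p}^{n+1/2}v_{y,p}^{n+1/2}=0$. This gives
\begin{equation*}
\mathcal{E}_K^{n+1}-\mathcal{E}_K^n=\Delta t\sum_{p}\omega_p\big(v_{x,p}^{n+1/2}E_{x,p}^{n+1/2}+v_{y,p}^{n+1/2}E_{y,p}^{n+1/2}\big).
\end{equation*}
Expand $E_{x,p}^{n+1/2}$ and $E_{y,p}^{n+1/2}$ as sums over $i$ with $S^m$ evaluated at $x_p^{n+1/2}$.

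\emph{Auxiliary variables.} Multiply the $r_x$ update by $r_x^n+r_x^{n+1}=2r_x^{n+1/2}$. Since $S^m$ is even, $(r_x^{n+1})^2-(r_x^n)^2$ equals minus the $E_x$ part of the kinetic change. Multiply \eqref{VM1_ry} by $2r_y^{n+1/2}$. Its first sum likewise cancels the $E_y$ part of the kinetic change, leaving
\begin{equation*}
(r_y^{n+1})^2-(r_y^n)^2=\ldots-\Delta t\sum_i E_{y,i}^{n+1/2}\big(B_{i+1/2}^{n+1/2}-B_{i-1/2}^{n+1/2}\big).
\end{equation*}

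\emph{Magnetic energy.} Multiply \eqref{VM1_Bz} by $B^{n+1/2}_{i+1/2}\Delta x$ and sum over $i$:
\begin{equation*}
\tfrac12\sum_i\big((B^{n+1}_{i+1/2})^2-(B^n_{i+1/2})^2\big)\Delta x=-\Delta t\sum_i B^{n+1/2}_{i+1/2}\big(E^{n+1/2}_{y,i+1}-E^{n+1/2}_{y,i}\big).
\end{equation*}
A periodic summation by parts (shift $i\to i-1$ in the first term) turns the right-hand side into $+\Delta t\sum_i E^{n+1/2}_{y,i}(B^{n+1/2}_{i+1/2}-B^{n+1/2}_{i-1/2})$, which cancels the leftover curl term in the $r_y$ identity. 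Summing all pieces shows $\tilde{\mathcal{E}}_T^{n+1}=\tilde{\mathcal{E}}_T^n$.

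This cancellation is the main bookkeeping obstacle: the sign and the index alignment must match between the staggered $E_y$ (at $x_i$) and $B$ (at $x_{i+1/2}$). It works precisely because the $r_y$ equation uses the same midpoint $B^{n+1/2}$ and $E_y^{n+1/2}$ as the Faraday update. The predictor current $J_y^{*,n+1/2}$ in \eqref{VM1_Ey} never enters the energy balance, because $E_y$ energy is carried by $r_y$ rather than by $\sum E_y^2$. So no compatibility between the $E_y$ update and the $r_y$ update is needed.

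\emph{Gauss law.} The $E_x$ update is identical in form to \eqref{VA1_E}, with $\bar J_{x}$ defined by the same $\Phi^{m-1}$ differences, and positions are advanced by $x_p^{n+1}-x_p^n=\Delta t\,v_{x,p}^*$. Moreover $\rho_i$ depends only on positions. The computation in the proof of the VA-I proposition therefore applies verbatim, giving $(\rho_i^{n+1}-\rho_i^n)/\Delta t=(\bar J_{x,i-1/2}-\bar J_{x,i+1/2})/\Delta x$, equal to the discrete divergence of the $E_x$ increment. Induction from the initial Poisson solve concludes.
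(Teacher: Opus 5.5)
Your proposal is correct and takes essentially the same route as the paper. Both arguments use midpoint-multiplied energy identities for $v_{x,p}$ and $v_{y,p}$ (where the magnetic force terms cancel), for $r_x$ and $r_y$, and for $B$. In both, a periodic summation by parts turns the Faraday term into exactly the curl term left over in the $r_y$ identity. The Gauss law is then inherited verbatim from the VA-I Villasenor--Buneman computation, since both the $E_x$ increment and $\rho$ depend only on the particle positions at $t^n$ and $t^{n+1}$ through the same $\Phi^{m-1}$ differences.

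Your extra remarks make explicit two points the paper leaves implicit: that $S^m$ is even, and that $J_y^{*,n+1/2}$ never enters the energy balance because the $E_y$ energy is carried by $r_y$.
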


\begin{proof}
    We have
    \begin{align}
    \frac{1}{2}\sum_{p=1}^{N_p} \omega_p \left[ (v_{x,p}^{n+1})^2 - (v_{x,p}^n)^2  \right]
    &= \sum_{p=1}^{N_p} \omega_p v_{x,p}^{n+1/2} \Delta t E_x^{n+1/2}(x_p^{n+1/2}) \nonumber\\
    &\quad +\sum_{p=1}^{N_p} \omega_p v_{x,p}^{n+1/2} v_{y,p}^{n+1/2} \Delta t B^{n+1/2}(x_p^{n+1/2}),\\
    \frac{1}{2}\sum_{p=1}^{N_p} \omega_p \left[ (v_{y,p}^{n+1})^2 - (v_{y,p}^n)^2  \right]
    &= \sum_{p=1}^{N_p} \omega_p v_{y,p}^{n+1/2} \Delta t E_y^{n+1/2}(x_p^{n+1/2}) \nonumber\\
    &\quad - \sum_{p=1}^{N_p} \omega_p v_{x,p}^{n+1/2} v_{y,p}^{n+1/2} \Delta t B^{n+1/2}(x_p^{n+1/2}),\\
    (r_x^{n+1})^2 - (r_x^n)^2  & = -\Delta t \sum_{p=1}^{N_p} \omega_p E_{x}^{n+1/2} (x_p^{n+1/2}) v_{x,p}^{n+1/2},  \\
    (r_y^{n+1})^2 - (r_y^n)^2  & = -\Delta t \sum_{p=1}^{N_p} \omega_p E_{y}^{n+1/2} (x_p^{n+1/2}) v_{y,p}^{n+1/2} \nonumber\\
    &\quad - \Delta t \sum_{i=0}^{N_x-1} E_{y,i}^{n+1/2} (B^{n+1/2}_{i+1/2}-B^{n+1/2}_{i-1/2}), \\
    \frac{1}{2}\sum_{i=0}^{N_x-1} \left( (B^{n+1}_{i+1/2})^2 - (B^{n}_{i+1/2})^2 \right) \Delta x
    & = -\Delta t  \sum_{i=0}^{N_x-1} B^{n+1/2}_{i+1/2} (E_{y,i+1}^{n+1/2}-E_{y,i}^{n+1/2}) \nonumber\\
    &=\Delta t \sum_{i=0}^{N_x-1} E_{y,i}^{n+1/2} (B^{n+1/2}_{i+1/2}-B^{n+1/2}_{i-1/2}).
\end{align}
Therefore, $\tilde{\mathcal{E}}^{n+1}_T=\tilde{\mathcal{E}}^n_T$.

The proof of the Gauss law preservation is identical to the VA system, since it is not affected by $v_y$, $E_y$ and $B$.
\end{proof}
\subsection{Scheme VM-II}
\label{subsec:VM-II}
Introducing the scaled electric field $g_{x,y}(t, x) := \frac{E_{x,y}(t, x)}{r_{x,y}(t)}$, we rewrite \eqref{VM_semidiscrete} into 
\begin{subequations}
\label{VM_sd_rg}
\begin{align}
    \dot{x}_p(t) &= v_{x,p}(t), \\
    \dot{v}_{x,p}(t) &= r_x(t) g_{x,p}(t)+B_p(t)v_{y,p}(t), \\
    \dot{v}_{y,p}(t) &= r_y(t) g_{y,p}(t)-B_p(t)v_{x,p}(t), \\
    r_x(t)\dot{g}_{x,i+1/2}(t) &= -J_{x,i+1/2}(t)+\frac{1}{2} g_{x,i+1/2}(t) \left(\sum_{j=0}^{N_x-1} g_{x,j+1/2}(t) J_{x,j+1/2}(t) \Delta x\right), \\
    r_y(t)\dot{g}_{y,i}(t) &= -J_{y,i}(t)-\frac{B_{i+1/2}(t)-B_{i-1/2}(t)}{\Delta x}+\frac{1}{2}g_{y,i}(t) \left(\sum_{j=0}^{N_x-1} g_{y,j}(t) J_{y,j}(t) \Delta x \right) \nonumber\\ 
    &\quad+\frac{1}{2}g_{y,i}(t) \left(\sum_{j=0}^{N_x-1} g_{y,j}(t) (B_{j+1/2}(t)-B_{j-1/2}(t))\right), \\
    \dot{r}_x(t) &= - \frac{1}{2} \sum_{i=0}^{N_x-1} g_{x,i+1/2}(t) J_{x,i+1/2}(t) \Delta x, \\
    \dot{r}_y(t) &= - \frac{1}{2} \sum_{i=0}^{N_x-1} g_{y,i}(t) J_{y,i}(t) \Delta x -\frac{1}{2} \sum_{i=0}^{N_x-1} g_{y,i}(t) (B_{i+1/2}(t)-B_{i-1/2}(t)),\\
    \dot{B}_{i+1/2}(t)&=-\frac{r_y(t)g_{y,i+1}(t)-r_y(t)g_{y,i}(t)}{\Delta x}.
\end{align}
\end{subequations}
Here the scaled fields at particle positions are defined as
\begin{align}
    g_{x,p}(t)=\sum_{i=0}^{N_x-1}g_{x,i+1/2}(t)
    S^m(x_p(t)-x_{i+1/2}), \quad
    g_{y,p}(t)&=\sum_{i=0}^{N_x-1}g_{y,i}(t)
    S^m(x_p(t)-x_i).
\end{align}

We introduce a time splitting for \eqref{VM_sd_rg}, similar to VA-II, consisting of four subflows $\varphi_{\Delta t}^{[\ell]}, \ell=1, 2, 3,4$:
\begin{subequations}
\begin{align}
  \varphi^{[1]}_{\Delta t} \;\;\;   & \begin{cases}
\dot{x}_p(t) = v_{x,p}(t), \\
r_x(t)\dot{g}_{x,i+1/2}(t) =-J_{x,i+1/2}(t),\\
r_y(t)\dot{g}_{y,i}(t)=-J_{y,i}(t),
\end{cases}\\
  \varphi^{[2]}_{\Delta t} \;\;\;   & \begin{cases}
  r_x \dot{g}_{x,i+1/2} = \frac{1}{2}g_{x,i+1/2}(t)\left(\sum_{j=0}^{N_x-1} g_{x,j+1/2}(t) J_{x,j+1/2}(t) \Delta x\right),\\
  r_y \dot{g}_{y,i} = \frac{1}{2}g_{y,i}(t)\left(\sum_{j=0}^{N_x-1} g_{y,j}(t) J_{y,j}(t) \Delta x\right),
  \end{cases}\\
  \varphi^{[3]}_{\Delta t} \;\;\;     & \begin{cases}
\dot{v}_{x,p}(t) = r_x(t) g_{x,p}(t),\\
\dot{v}_{y,p}(t) = r_y(t) g_{y,p}(t),\\
\dot{r}_x(t) = 
- \frac{1}{2} \sum_{i=0}^{N_x-1} g_{x,i+1/2}(t) J_{x,i+1/2}(t) \Delta x, \\
\dot{r}_y(t) = 
- \frac{1}{2} \sum_{i=0}^{N_x-1} g_{y,i}(t) J_{y,i}(t) \Delta x,
\end{cases} \\
  \varphi^{[4]}_{\Delta t} \;\;\;     & \begin{cases}
  \label{VM_sd_phi4}
\dot{v}_{x,p}(t) = B_p(t) v_{y,p}(t),\\
\dot{v}_{y,p}(t) = -B_p(t) v_{x,p}(t),\\
\dot{B}_{i+1/2}(t)=-\frac{r_y(t)g_{y,i+1}(t)-r_y(t)g_{y,i}(t)}{\Delta x},\\
\dot{r}_y(t) = - \frac{1}{2} \sum_{i=0}^{N_x-1} g_{y,i}(t) (B_{i+1/2}(t)-B_{i-1/2}(t)),\\
r_y\dot{g}_{y,i}(t) = -\frac{B_{i+1/2}(t)-B_{i-1/2}(t)}{\Delta x} + \frac{1}{2}g_{y,i}(t) \left(\sum_{j=0}^{N_x-1} g_{y,j}(t) (B_{j+1/2}(t)-B_{j-1/2}(t))\right).
       \end{cases}
\end{align}
\end{subequations}
In each subsystem, the variables that are not written remain constant. We now introduce a second-order scheme for each subsystem. We again use the superscript $(-)$ to denote the input to a subsystem and the superscript $(+)$ to denote its output.

\subsubsection{First subsystem: $\varphi^{[1]}_{\Delta t}$}
The update steps are
\begin{subequations}
\begin{align}
    x_p^+-x_p^-&=\Delta t v_{x,p}^-,\\
    r_x^-(g_{x,i+1/2}^+-g_{x,i+1/2}^-)
    &=-\Delta t \bar{J}_{x,i+1/2},\\
    r_y^-(g_{y,i}^+-g_{y,i}^-)
    &=-\Delta t J^{\text{mid}}_{y,i},
    \label{VM_rg_current_update}
\end{align}
\end{subequations}
where
\begin{align}
    x_p^{\mathrm{mid}}&=\frac{x_p^-+x_p^+}{2},\\
    \bar{J}_{x,i+1/2}&=\frac{1}{\Delta t}\sum_{p=1}^{N_p} \omega_p [\Phi^{m-1}(x_{i+1/2}-x_p^-)-\Phi^{m-1}(x_{i+1/2}-x_p^+)],\\
    J_{y,i}^{\text{mid}}
    &=\frac{1}{\Delta x}\sum_{p=1}^{N_p}\omega_pv_{y,p}^-
      S^m(x_i-x_p^{\mathrm{mid}}).
\end{align}

\subsubsection{Second subsystem: $\varphi_{\Delta t}^{[2]}$}
Similarly to the second subsystem in VA-II, we update $g_x$ and $g_y$ with
\begin{equation}
    g_{x,i+1/2}^+
    =\frac{g_{x,i+1/2}^-}
    {1-\dfrac{\Delta t}{2r_x^-}\sum_{j=0}^{N_x-1}g_{x,j+1/2}^- J_{x,j+1/2}^-\Delta x},\quad
    g_{y,i}^+
    =\frac{g_{y,i}^-}
    {1-\dfrac{\Delta t}{2r_y^-}\sum_{j=0}^{N_x-1}g_{y,j}^-J_{y,j}^-\Delta x}.
\end{equation}

\subsubsection{Third subsystem: $\varphi_{\Delta t}^{[3]}$}
The update steps are
\begin{subequations}
\begin{align}
    v_{x,p}^+-v_{x,p}^-&=\Delta t r_x^{\mathrm{mid}}g_{x,p}^-,&
    v_{y,p}^+-v_{y,p}^-&=\Delta t r_y^{\mathrm{mid}}g_{y,p}^-,\\
    r_x^+-r_x^-&=-\frac{\Delta t}{2}\sum_{p=1}^{N_p}\omega_p
    g_{x,p}^-v_{x,p}^{\mathrm{mid}},&
    r_y^+-r_y^-&=-\frac{\Delta t}{2}\sum_{p=1}^{N_p}\omega_p
    g_{y,p}^-v_{y,p}^{\mathrm{mid}},
    \label{VM_rg_electric_midpoint}
\end{align}
\end{subequations}
where
\begin{align}
    v_{x,p}^{\mathrm{mid}}&=\frac{v_{x,p}^-+v_{x,p}^+}{2},\quad
    v_{y,p}^{\mathrm{mid}}=\frac{v_{y,p}^-+v_{y,p}^+}{2},\\
    r_x^{\mathrm{mid}}&=\frac{r_x^-+r_x^+}{2},\quad
    r_y^{\mathrm{mid}}=\frac{r_y^-+r_y^+}{2}.
\end{align}

Similarly to the third subsystem in VA-II, we calculate 
\begin{align}
    r_x^{\mathrm{mid}}
    =\frac{r_x^--\dfrac{\Delta t}{4}\sum_{p=1}^{N_p}\omega_p
    g_{x,p}^-v_{x,p}^-}
    {1+\dfrac{\Delta t^2}{8}\sum_{p=1}^{N_p}\omega_p(g_{x,p}^-)^2},\quad
    r_y^{\mathrm{mid}}
    =\frac{r_y^--\dfrac{\Delta t}{4}\sum_{p=1}^{N_p}\omega_p
    g_{y,p}^-v_{y,p}^-}
    {1+\dfrac{\Delta t^2}{8}\sum_{p=1}^{N_p}\omega_p(g_{y,p}^-)^2}.
    \label{VM_rg_r_midpoint}
\end{align}

\subsubsection{Fourth subsystem: $\varphi_{\Delta t}^{[4]}$}
Adding up the last two equations in \eqref{VM_sd_phi4} gives 
\begin{align}
    \frac{d}{dt}(r_yg_{y,i})=-\frac{B_{i+1/2}(t)-B_{i-1/2}(t)}{\Delta x}.
\end{align}
Let $G_{y,i}=r_y g_{y,i}$, then the update steps are 
\begin{subequations}
\begin{align}
    \label{VM2_4_vx}
    v_{x,p}^+ - v_{x,p}^- &=\Delta t B^\text{mid}_p v_{y,p}^\text{mid}, \\
    \label{VM2_4_vy}
    v_{y,p}^+ - v_{y,p}^- &=-\Delta tB^\text{mid}_p v_{x,p}^\text{mid},\\
    \label{VM2_4_B}
    B_{i+1/2}^+-B_{i+1/2}^- &= -\frac{\Delta t}{\Delta x} (G_{y,i+1}^\text{mid}-G_{y,i}^\text{mid}),\\
    \label{VM2_4_r}
    r_y^+ - r_y^- &= - \frac{\Delta t}{2r_y^\text{mid}} \sum_{i=0}^{N_x-1} G_{y,i}^\text{mid}(B_{i+1/2}^{\text{mid}}-B_{i-1/2}^{\text{mid}}),\\
    \label{VM2_4_G}
    G_{y,i}^+-G_{y,i}^-&=-\frac{\Delta t}{\Delta x} (B_{i+1/2}^\text{mid}-B_{i-1/2}^\text{mid}),
\end{align}
\end{subequations}
where $G_{y,i}^{\pm}=r_y^{\pm}g_{y,i}^{\pm}$ and
\begin{align}
    v_{x,p}^{\mathrm{mid}}&=\frac{v_{x,p}^-+v_{x,p}^+}{2},\quad
    v_{y,p}^{\mathrm{mid}}=\frac{v_{y,p}^-+v_{y,p}^+}{2},\\
    B_{i+1/2}^{\mathrm{mid}}&=\frac{B_{i+1/2}^-+B_{i+1/2}^+}{2},\quad
    G_{y,i}^{\mathrm{mid}}=\frac{G_{y,i}^-+G_{y,i}^+}{2},\\
    r_y^{\mathrm{mid}}&=\frac{r_y^-+r_y^+}{2},\quad
    B_p^{\mathrm{mid}}=\sum_{i=0}^{N_x-1}B_{i+1/2}^{\mathrm{mid}}
    S^m(x_p^--x_{i+1/2}).
\end{align}
In \eqref{VM2_4_r}, $G_{y,i}^\text{mid}/r_y^\text{mid}$ is used as a
second-order midpoint approximation of $g_{y,i}$.

Similarly to VM-I, we first solve $G_y^+$ and $B^+$ with
\begin{align}
    -kG_{y,i-1}^++(1+2k)G_{y,i}^+-kG_{y,i+1}^+ = kG_{y,i-1}^- + (1-2k)G_{y,i}^- + kG_{y,i+1}^- - \frac{\Delta t}{\Delta x}(B^-_{i+1/2}-B^-_{i-1/2}),
\end{align}
where $k=\frac{\Delta t^2}{4\Delta x^2}$. Afterward, multiplying \eqref{VM2_4_r} by $2r_y^\text{mid}$ gives the $r_y$ update
\begin{align}
    (r_y^+)^2-(r_y^-)^2
    &=-\Delta t\sum_{i=0}^{N_x-1}G_{y,i}^{\text{mid}}
    (B_{i+1/2}^{\text{mid}}-B_{i-1/2}^{\text{mid}})\nonumber\\
    &=\frac{1}{2}\sum_{i=0}^{N_x-1}
    \left[(G_{y,i}^+)^2
    -(G_{y,i}^-)^2\right]\Delta x,
\end{align}
and hence $g_{y,i}^+=G_{y,i}^+/r_y^+$.

Finally, with $\beta_p=\frac{\Delta t}{2}B_p^{\mathrm{mid}}$, the velocity
update is
\begin{align}
    v_{x,p}^+
    &=\frac{1-\beta_p^2}{1+\beta_p^2}v_{x,p}^-
      +\frac{2\beta_p}{1+\beta_p^2}v_{y,p}^-,\\
    v_{y,p}^+
    &=-\frac{2\beta_p}{1+\beta_p^2}v_{x,p}^-
      +\frac{1-\beta_p^2}{1+\beta_p^2}v_{y,p}^-.
\end{align}

\subsubsection{Full scheme}
Denote $U(t) = (x_p,v_{x,p},v_{y,p},B_{i+1/2},r_x,r_y,g_{x,i+1/2},g_{y,i})$ for $p=1, \dots, N_p$ and $i=0, \dots, N_x-1$, and its discretization $U^n\approx U(t_n)$. A symmetric second-order composition is 
\begin{align}
    U^{n+1}
    =\varphi_{\Delta t/2}^{[1]}
    \circ\varphi_{\Delta t/2}^{[2]}
    \circ\varphi_{\Delta t/2}^{[3]}
    \circ\varphi_{\Delta t}^{[4]}
    \circ\varphi_{\Delta t/2}^{[3]}
    \circ\varphi_{\Delta t/2}^{[2]}
    \circ\varphi_{\Delta t/2}^{[1]}(U^n).
    \label{VM_rg_Strang}
\end{align}

The electric field $E_x$ and $E_y$ are updated alongside the system as
\begin{align}
    E_{x,i+1/2}^{n+1}-E_{x,i+1/2}^n& = - \Delta t \bar{J}_{x,i+1/2}^{n+1/2}, \\
    E_{y,i}^{n+1}-E_{y,i}^n& =- \Delta t J_{y,i}^{n+1/2} - \frac{B^{n+1/2}_{i+1/2}-B^{n+1/2}_{i-1/2}}{\Delta x} \Delta t,
\end{align}
where
\begin{align}
    \bar{J}_{x,i+1/2}^{n+1/2} & =\frac{1}{\Delta t}\sum_{p=1}^{N_p} \omega_p [\Phi^{m-1}(x_{i+1/2}-x_p^n)-\Phi^{m-1}(x_{i+1/2}-x_p^{n+1})], \\
    J_{y,i}^{n+1/2} & = \frac{1}{\Delta x} \sum_{p=1}^{N_p} \omega_p v_{y,p}^{n+1/2} S^m(x_{i}-x_p^{n+1/2}), \\
    B^{n+1/2}_{i+1/2} & =\frac{1}{2}(B_{i+1/2}^n+B_{i+1/2}^{n+1}), \\
    v_{y,p}^{n+1/2} & =\frac{1}{2}(v_{y,p}^n+v_{y,p}^{n+1}).
\end{align}

\begin{proposition}
    Scheme VM-II preserves
    \begin{itemize}
    \item the total modified energy
    \begin{equation*}
    \begin{aligned}
    \tilde{\mathcal{E}}_T^n
    &=\frac{1}{2} \sum_{p=1}^{N_p} \omega_p [(v_{x,p}^n)^2 + (v_{y,p}^n)^2]
    + (r_x^n)^2 + (r_y^n)^2 - 2C_0  + \frac{1}{2} \sum_{i=0}^{N_x-1} (B_{i+1/2}^n)^2\Delta x
    \end{aligned}
    \end{equation*}
    for all $n\in \mathbb{N}$;
    \item the Gauss law $E_{x,i+1/2}^{n}-E_{x,i-1/2}^{n}=\Delta x (\rho_i^n-1)$ for all $n\in \mathbb{N}$.
    \end{itemize}
\end{proposition}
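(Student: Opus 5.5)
Since the composition \eqref{VM_rg_Strang} is a product of the maps $\varphi^{[\ell]}$, I will show that each subflow, as discretized, preserves $\tilde{\mathcal{E}}_T$ exactly. The invariant involves only $v_{x,p},v_{y,p},r_x,r_y,B_{i+1/2}$.

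\emph{Subflows 1 and 2.} These update only $x_p$, $g_x$ and $g_y$, so $\tilde{\mathcal{E}}_T^+=\tilde{\mathcal{E}}_T^-$ trivially.

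\emph{Subflow 3.} I treat it exactly as in the proof for VA-II, componentwise. Multiplying the $v_{x,p}$ update by $\omega_p v_{x,p}^{\mathrm{mid}}$ and summing over $p$ gives
\[
\tfrac12\sum_p\omega_p\bigl((v_{x,p}^+)^2-(v_{x,p}^-)^2\bigr)=\Delta t\, r_x^{\mathrm{mid}}\sum_p\omega_p g_{x,p}^- v_{x,p}^{\mathrm{mid}}.
\]
Multiplying the $r_x$ update in \eqref{VM_rg_electric_midpoint} by $2r_x^{\mathrm{mid}}$ gives the same term with the opposite sign in $(r_x^+)^2-(r_x^-)^2$. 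The $y$ pair cancels identically. Note that \eqref{VM_rg_r_midpoint} is just the explicit solution of this midpoint system, so the identities hold for the computed values.

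\emph{Subflow 4.} This step is the main bookkeeping obstacle, and I split it into two parts.

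First, the magnetic rotation. Multiplying \eqref{VM2_4_vx} by $\omega_p v_{x,p}^{\mathrm{mid}}$ and \eqref{VM2_4_vy} by $\omega_p v_{y,p}^{\mathrm{mid}}$ and adding, the right-hand sides cancel. Hence the kinetic energy is unchanged; equivalently, the explicit Cayley-type formulas are an orthogonal rotation.

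Second, the field exchange. Multiplying \eqref{VM2_4_B} by $B^{\mathrm{mid}}_{i+1/2}\Delta x$ and summing, then using periodic summation by parts, gives
\[
\tfrac12\sum_i\bigl((B^+_{i+1/2})^2-(B^-_{i+1/2})^2\bigr)\Delta x=\Delta t\sum_i G^{\mathrm{mid}}_{y,i}\bigl(B^{\mathrm{mid}}_{i+1/2}-B^{\mathrm{mid}}_{i-1/2}\bigr).
\]
Multiplying \eqref{VM2_4_r} by $2r_y^{\mathrm{mid}}$ gives exactly the negative of this for $(r_y^+)^2-(r_y^-)^2$, so the two changes sum to zero. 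One point needs checking: the implementation solves \eqref{VM2_4_B} and \eqref{VM2_4_G} jointly through the tridiagonal system and then defines $(r_y^+)^2$ from the displayed identity. I must confirm that this $r_y^+$ is consistent with \eqref{VM2_4_r}. It is, because $(r_y^+)^2-(r_y^-)^2=2r_y^{\mathrm{mid}}(r_y^+-r_y^-)$, which is precisely \eqref{VM2_4_r}. I also need $(r_y^+)^2>0$ so that $r_y^+$ is real; I take this to be guaranteed by the choice of $C_0$, as elsewhere in the paper. Adding the three subflow identities gives $\tilde{\mathcal{E}}_T^{n+1}=\tilde{\mathcal{E}}_T^n$.

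\emph{Gauss law.} $E_x$ is updated by $E_{x,i+1/2}^{n+1}-E_{x,i+1/2}^n=-\Delta t\,\bar J^{n+1/2}_{x,i+1/2}$, with $\bar J$ built from the full-step positions $x_p^n$ and $x_p^{n+1}$. This is the same update as in VA-I, and $\rho^n$ depends only on positions. The identity \eqref{Charge1} from the proof for VA-I therefore applies verbatim and yields $(\rho_i^{n+1}-\rho_i^n)\Delta x=(\bar J_{i-1/2}-\bar J_{i+1/2})\Delta t$, so the Gauss law propagates by induction from the Poisson-initialized data. This part holds no matter how the two half-step position updates inside the composition are arranged, since the argument uses only $x_p^n$ and $x_p^{n+1}$.
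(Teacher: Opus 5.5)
Your proposal is correct and follows essentially the same route as the paper: trivial invariance under $\varphi^{[1]}$ and $\varphi^{[2]}$, midpoint cancellations in $\varphi^{[3]}$ and in $\varphi^{[4]}$ (the magnetic rotation, and the $B$--$r_y$ exchange via periodic summation by parts), and the Gauss law inherited verbatim from the VA-I charge identity. Your extra checks are sound refinements the paper leaves implicit: that the computed $r_y^+$ is consistent with \eqref{VM2_4_r}, and that the charge argument uses only $x_p^n$ and $x_p^{n+1}$. One small correction: positivity of $(r_y^+)^2$ is really a well-posedness assumption of the scheme, not something the choice of $C_0$ strictly guarantees.
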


\begin{proof} 
Since $\tilde{\mathcal{E}}^n_T$ remains constant along $\varphi^{[1]}_{\Delta t}$ and $\varphi^{[2]}_{\Delta t}$. We only need to check $\varphi^{[3]}_{\Delta t}$ and $\varphi^{[4]}_{\Delta t}$.

In $\varphi^{[3]}_{\Delta t}$, we have
\begin{align*}
    \frac{1}{2}\sum_{p=1}^{N_p} \omega_p \left( (v_{x,p}^+)^2 - (v_{x,p}^-)^2 \right) & = \Delta t r_x^\text{mid} \sum_{p=1}^{N_p} \omega_p v_{x,p}^\text{mid} g_{x,p}^-,\\
    \frac{1}{2}\sum_{p=1}^{N_p} \omega_p \left( (v_{y,p}^+)^2 - (v_{y,p}^-)^2 \right) & = \Delta t r_y^\text{mid} \sum_{p=1}^{N_p} \omega_p v_{y,p}^\text{mid} g_{y,p}^-,\\
    (r_x^+)^2 - (r_x^-)^2 & = -\Delta t r_x^\text{mid} \sum_{p=1}^{N_p} \omega_p v_{x,p}^\text{mid} g_{x,p}^-,\\
    (r_y^+)^2 - (r_y^-)^2 & = -\Delta t r_y^\text{mid} \sum_{p=1}^{N_p} \omega_p v_{y,p}^\text{mid} g_{y,p}^-.
\end{align*}
Summing these equalities leads to $\tilde{\mathcal{E}}_T^+=\tilde{\mathcal{E}}_T^-$ since the electromagnetic field $B$ is unchanged during this step. 

In $\varphi^{[4]}_{\Delta t}$, we multiply \eqref{VM2_4_vx} by $v_{x,p}^\text{mid}$ and \eqref{VM2_4_vy} by $v_{y,p}^\text{mid}$, their sum vanishes, which means the midpoint preserves the kinetic energy. We then multiply \eqref{VM2_4_B} by $B_{i+1/2}^\text{mid}$ and \eqref{VM2_4_r} by $2r_y^\text{mid}$ to get 
\begin{align}
    \frac{1}{2} \sum_{i=0}^{N_x-1} \left( (B^+_{i+1/2})^2 - (B^-_{i+1/2})^2 \right) \Delta x & = -\Delta t \sum_{i=0}^{N_x-1} B^\text{mid}_{i+1/2} (G_{y,i+1}^\text{mid}-G_{y,i}^\text{mid})\nonumber\\&= \Delta t\sum_{i=0}^{N_x-1}G_{y,i}^{\text{mid}}
    (B_{i+1/2}^{\text{mid}}-B_{i-1/2}^{\text{mid}}),\\
    (r_y^+)^2-(r_y^-)^2
    &=-\Delta t\sum_{i=0}^{N_x-1}G_{y,i}^{\text{mid}}
    (B_{i+1/2}^{\text{mid}}-B_{i-1/2}^{\text{mid}}).
\end{align}
Adding the two equations leads to $\tilde{\mathcal{E}}^{n+1}_T=\tilde{\mathcal{E}}^{n}_T$ for the fourth subsystem. Therefore, we conclude the full scheme preserves the total modified energy.

The Gauss law is preserved in the system, and the proof is identical to the VA system.
\end{proof}

\begin{remark}
The extension of the SAV formulation to the 3D3V VM system is straightforward. For
$\alpha\in\{x,y,z\}$, we introduce one SAV for each component of the
electric field,
\begin{equation}
    r_\alpha^2(t)=\frac{1}{2}\int_\Omega E_\alpha^2(t,\boldsymbol{x})
    \rd{\boldsymbol{x}}+C_0,
    \qquad
    \dot r_\alpha(t)=\frac{1}{2r_\alpha(t)}\int_\Omega
    E_\alpha\big((\nabla\times\boldsymbol{B})_\alpha-J_\alpha\big)
    \rd{\boldsymbol{x}}.
\end{equation}
The corresponding modified energy is
\begin{equation}
    \widetilde{\mathcal{E}}_T
    =\frac{1}{2}\sum_{p=1}^{N_p}\omega_p|\boldsymbol{v}_p|^2
    +\sum_{\alpha}r_\alpha^2-3C_0
    +\frac{1}{2}\int_\Omega|\boldsymbol{B}|^2\rd{\boldsymbol{x}}.
\end{equation}

Scheme VM-I extends by using the three-dimensional predictor
$\boldsymbol{v}_p^*-\boldsymbol{v}_p^n=\frac{\Delta t}{2}
(\boldsymbol{E}_p^n+\boldsymbol{v}_p^*\times\boldsymbol{B}_p^n)$ and then
applying the same midpoint field, velocity, and SAV updates.
For Scheme VM-II, we set $g_\alpha=E_\alpha/r_\alpha$ and apply the same symmetric splitting.

For the current used for charge conservation, we can adopt the current deposition from Esirkepov \cite{esirkepov-2001}. On a Yee's lattice, the current
$\boldsymbol{\overline{J}}_h^{n+1/2}$ is constructed to satisfy the discrete Gauss law
\begin{equation}
    \frac{\rho_{i,j,k}^{n+1}-\rho_{i,j,k}^{n}}{\Delta t}
    +D_x\overline{J}_{x,i,j,k}^{n+1/2}
    +D_y\overline{J}_{y,i,j,k}^{n+1/2}
    +D_z\overline{J}_{z,i,j,k}^{n+1/2}=0,
\end{equation}
where $D_\alpha$ is the directional finite-difference operator. 

Esirkepov's method writes 
\begin{align}
    \rho_p^{n+1}-\rho_p^n=W_{x,p}+W_{y,p}+W_{z,p},\quad D_\alpha\overline{J}_{\alpha,p}=-W_{\alpha,p}/\Delta t,
\end{align}
where the three $W_{\alpha,p}$ contain symmetric mixed products of the one-dimensional
shape-function differences. Since the discrete
operators on Yee's lattice satisfy $\nabla_h\cdot(\nabla_h\times\boldsymbol{B}_h)=0$,
taking the discrete divergence of the Amp\`ere update propagates
the discrete Gauss law from time level $n$ to $n+1$.  

For Scheme VM-II, we introduce the scaled electric fields
$g_\alpha=E_\alpha/r_\alpha$, $\alpha\in\{x,y,z\}$. 
The splitting in Subsection 
\ref{subsec:VM-II} extends to 
the 3D-3V system as follows, 
with  $\alpha\in\{x,y,z\}$:
\begin{subequations}
\begin{align}
\varphi_{\Delta t}^{[1]}\quad &
\begin{cases}
\dot{\boldsymbol{x}}_p=\boldsymbol{v}_p,\\
r_\alpha\dot{g}_\alpha=-J_\alpha,
\end{cases}\\
\varphi_{\Delta t}^{[2]}\quad &
\begin{cases}
r_\alpha\dot{g}_\alpha
=\dfrac{1}{2}g_\alpha
\displaystyle\int_\Omega g_\alpha J_\alpha\rd{\boldsymbol{x}},
\end{cases}\\
\varphi_{\Delta t}^{[3]}\quad &
\begin{cases}
\dot{\boldsymbol{v}}_p
=\displaystyle\sum_{\alpha\in\{x,y,z\}}
r_\alpha g_{\alpha,p}\boldsymbol{e}_\alpha,\\
\dot{r}_\alpha
=-\dfrac{1}{2}\displaystyle\int_\Omega
g_\alpha J_\alpha\rd{\boldsymbol{x}},
\end{cases}\\
\varphi_{\Delta t}^{[4]}\quad &
\begin{cases}
\dot{\boldsymbol{v}}_p
=\boldsymbol{v}_p\times\boldsymbol{B}_p,\\
\dot{\boldsymbol{B}}
=-\nabla\times
\left(\displaystyle\sum_{\alpha\in\{x,y,z\}}
r_\alpha g_\alpha\boldsymbol{e}_\alpha\right),\\
\dot{r}_\alpha
=\dfrac{1}{2}\displaystyle\int_\Omega
g_\alpha(\nabla\times\boldsymbol{B})_\alpha
\rd{\boldsymbol{x}},\\
r_\alpha\dot{g}_\alpha
=(\nabla\times\boldsymbol{B})_\alpha
-\dfrac{1}{2}g_\alpha
\displaystyle\int_\Omega
g_\alpha(\nabla\times\boldsymbol{B})_\alpha
\rd{\boldsymbol{x}},
\end{cases}
\end{align}
\end{subequations}
where $\boldsymbol{e}_\alpha$ are the Cartesian unit vectors, and
$g_{\alpha,p}$ and $\boldsymbol{B}_p$ denote the fields interpolated to
particle positions. In each subsystem, the variables that are not updated
remain constant. A symmetric second-order composition is then
\begin{equation}
\varphi_{\Delta t/2}^{[1]}
\circ\varphi_{\Delta t/2}^{[2]}
\circ\varphi_{\Delta t/2}^{[3]}
\circ\varphi_{\Delta t}^{[4]}
\circ\varphi_{\Delta t/2}^{[3]}
\circ\varphi_{\Delta t/2}^{[2]}
\circ\varphi_{\Delta t/2}^{[1]}.
\end{equation}

\end{remark}

\section{Numerical results}

In this section, we present several numerical results obtained from the proposed schemes: scheme VA-I, scheme VA-II, scheme VM-I, and scheme VM-II.

\subsection{The Vlasov-Amp\`ere system}

We test the scheme VA-I and scheme VA-II on the Landau damping test cases. The initial distribution function is given by 
\begin{equation} \label{Landau damping}
    f_{LD}(x,v)=\frac{1}{\sqrt{2\pi}}e^{-\frac{v^2}{2}}(1+\alpha \cos(kx)),
\end{equation}
where $k=0.5$, $\alpha=0.1$ for weak Landau damping and $k=0.5$, $\alpha=0.5$ for strong Landau damping. We set $N_p=2\times10^5$, $N_x=100$, $L=\frac{2\pi}{k}$, $\Delta t = 0.01$ and the SAV constant $C_0=1$. The weak Landau damping is run up to $t=20$, while the strong Landau damping is run up to $t=40$. With this initial condition, the electric energy $\mathcal{E}_E(t)=\frac{1}{2}\|E(t)\|_{L^2}^2$  for the weak Landau damping has a damping rate of $\gamma \approx -2\times0.1533$; $\mathcal{E}_E(t)$ for the strong Landau damping has a damping rate of $\gamma_1 \approx -2\times0.292$ and a growth rate of $\gamma_2 \approx 2\times0.0815$. 

Figure \ref{fig:Electrostatic} shows the numerical results of the Landau damping tests, where we plot the time history of the original electric energy ${\mathcal E}_E^n=\frac{1}{2}\sum_{i=0}^{N_x-1}(E_{i+1/2}^n)^2 \Delta x$, the modified electric energy $\tilde{\mathcal E}_E^n=(r^n)^2-C_0$, the original total energy error ${\mathcal E}_E^n+\frac{1}{2}\sum_{p=1}^{N_p}\omega_p(v_p^n)^2-{\mathcal E}_E^0-\frac{1}{2}\sum_{p=1}^{N_p}\omega_p(v_p^0)^2$, the modified total energy error $\tilde{\mathcal E}_E^n+\frac{1}{2}\sum_{p=1}^{N_p}\omega_p(v_p^n)^2-\tilde{\mathcal E}_E^0-\frac{1}{2}\sum_{p=1}^{N_p}\omega_p(v_p^0)^2$, and the Gauss law error $\epsilon_G^n=\sum_{i=0}^{N_x-1}(\frac{E_{i+1/2}^n-E_{i-1/2}^n}{\Delta x} -\rho_i^n+1)^2\Delta x$ with the periodic condition $E_{-1/2}=E_{N_x-1/2}$. Both schemes agree well with the theoretical growth and damping rates, and the modified total energy and the Gauss law errors remain at machine precision. The run time for VA-II is roughly twice that of VA-I.

To demonstrate the second-order temporal convergence of VA-I and VA-II, we plot the maximum total energy error $\|\mathcal{E}_T^n-\mathcal{E}_T^0\|_{\infty}$ for time steps $\Delta t=0.01, 0.02, 0.04, 0.08, 0.16, 0.32$. Since neither scheme preserves the original total energy, we expect this error to reflect the temporal discretization error. Indeed, Figure \ref{fig:landau_time_convergence} shows the second-order convergence for both schemes. For small \(\Delta t\), however, the observed convergence rate becomes unreliable because spatial discretization or particle approximation errors may dominate.

\begin{figure}
    \centering    \includegraphics[width=1.0\textwidth]{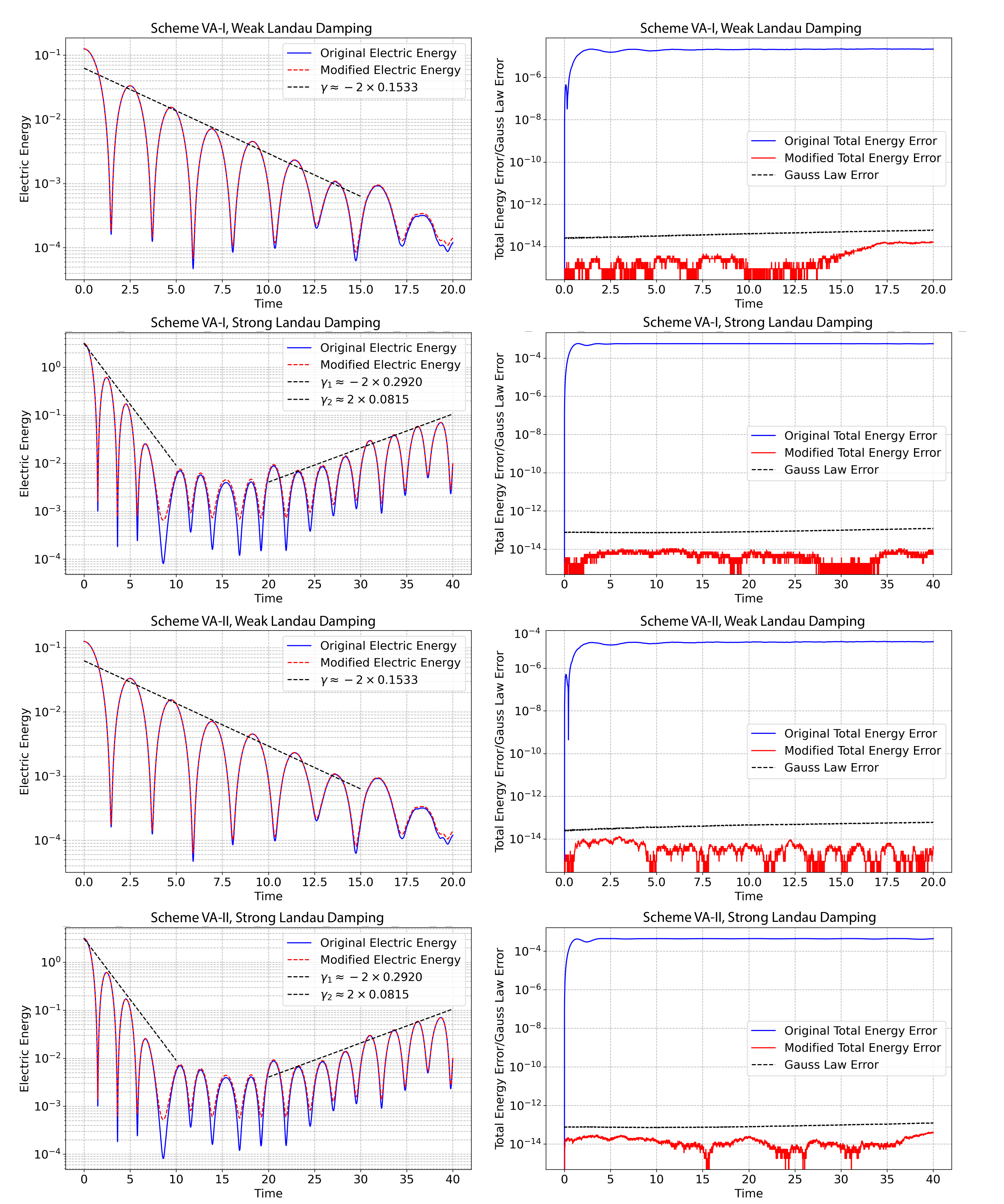}
    \caption{Left column: time evolution of $\mathcal{E}_E(t)$ and $\tilde{\mathcal{E}}_E(t)$. Upper two panels: scheme VA-I, lower two panels: scheme VA-II. 
    Right column: time evolution of $\left|\mathcal{E}_T(t) - \mathcal{E}_{T}(0)\right|$, $\left|\tilde{\mathcal{E}}_T(t) - \tilde{\mathcal{E}}_{T}(0)\right|$ and $\epsilon_G$. Upper two panels: scheme VA-I, lower two panels: scheme VA-II.}
    \label{fig:Electrostatic}
\end{figure}

\begin{figure}
    \centering     \includegraphics[width=1.0\textwidth]{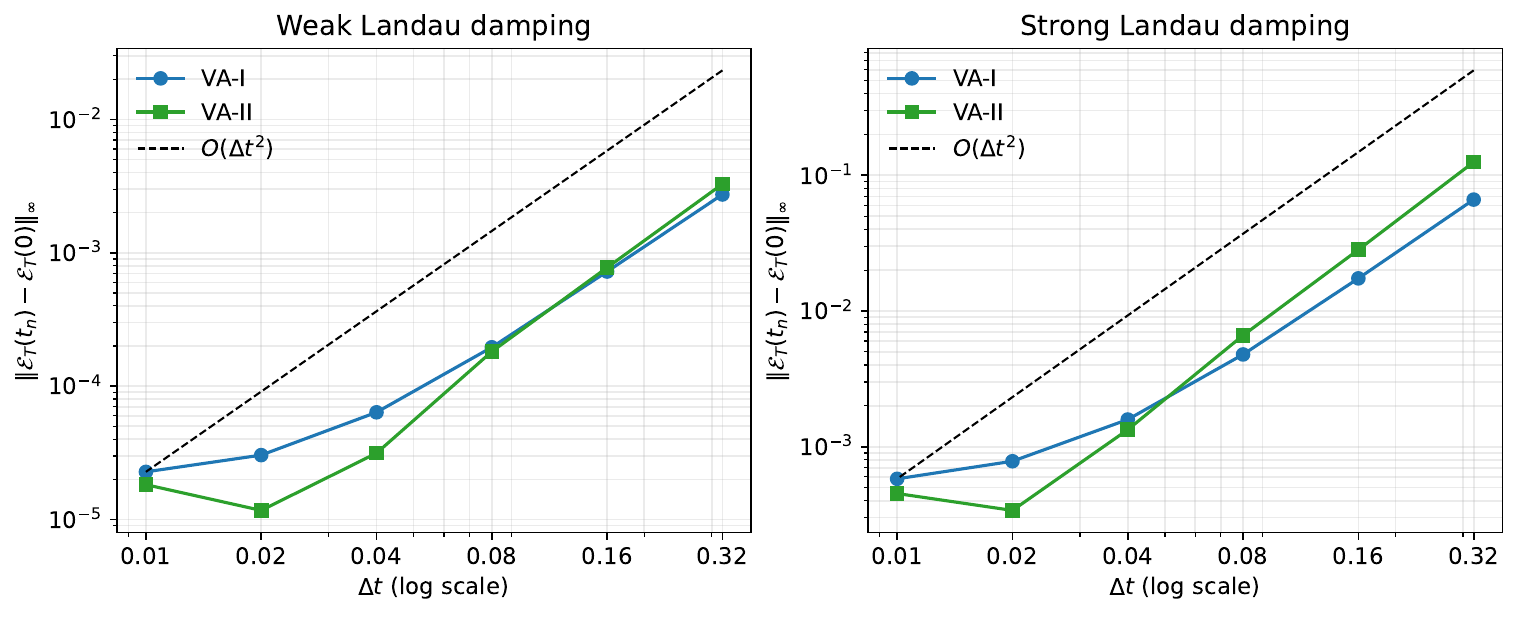}
    \caption{The maximum total energy error $\|\mathcal{E}_T^n-\mathcal{E}_T^0\|_{\infty}$ in the Landau damping tests for VA-I, VA-II. }
    \label{fig:landau_time_convergence}
\end{figure}

To study the effect of charge conservation, we adopt the cold two-stream instability test from \cite{chen-2011}. The initial distribution function is given by
\begin{equation}
    f_{TS}(x,v)=\frac{1}{2}\left[\delta(v-v_b)+\delta(v+v_b)\right](1+\alpha\cos x),
\end{equation}
where $\delta$ is the Dirac delta function. We set $L=2\pi$, $v_b=\sqrt{3}/2$, $\alpha=0.005$, $N_p=10^4$, $N_x=50$, $\Delta t=0.2$, $T=200$, and $C_0=1$. The particles are initialized with a deterministic quiet start: equally spaced quantiles of the perturbed spatial density are paired at the same position with velocities $v_b$ and $-v_b$, thereby suppressing finite-particle sampling noise at $t=0$. In addition to schemes VA-I and VA-II, we use two other schemes, denoted VA-I non-charge-conserving and VA-II non-charge-conserving, which are identical to VA-I and VA-II except that $\bar{J}_{i+1/2}$  used in \eqref{VA1_E} and \eqref{VA-II_current} is replaced by the midpoint current
\begin{equation}
    J_{i+1/2}^{*,n+1/2}=\frac{1}{\Delta x}\sum_{p=1}^{N_p}\omega_p v_p^* S^m(x_{i+1/2}-x_p^{n+1/2}),
\end{equation}
which maintains second-order temporal accuracy but violates the discrete Gauss's law.

Figure \ref{fig:charge-conservation} compares the four schemes. The electric energy in all schemes aligns with the theoretical growth rate $\gamma=0.5$ initially, but VA-I non-charge-conserving develops an instability in the nonlinear phase. At later times, it has a much larger Gauss law error and a larger maximum momentum error. VA-II non-charge-conserving still shows a reasonable electric energy error, suggesting that violating Gauss law may not be so vital for the VA-II scheme.

\begin{figure}[!htbp]
    \centering
    \includegraphics[width=0.82\textwidth]{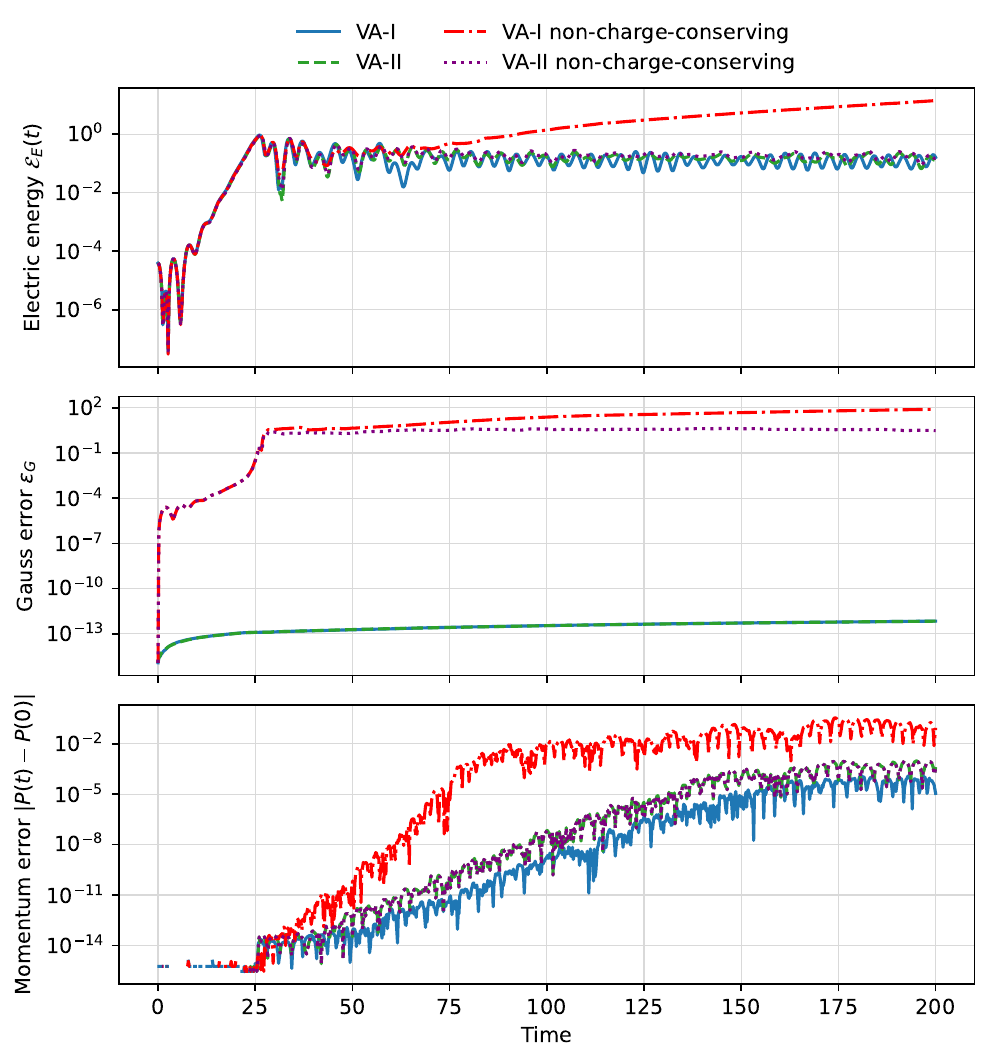}
    \caption{Two-stream instability test. Comparison of VA-I, VA-II, VA-I non-charge-conserving, and VA-II non-charge-conserving. Time evolution of $\mathcal{E}_E$, $\epsilon_G$ and momentum error $|P(t)-P(0)|$.}
    \label{fig:charge-conservation}
\end{figure}

\subsection{The Vlasov-Maxwell system}

We test the scheme VM-I and scheme VM-II on the Weibel instability test cases, with the initial distribution and fields given by
\begin{align} \label{Weibel instability}
    f_{WI}(x,v_x,v_y) & =\frac{1}{2\pi \sigma_1 \sigma_2}\exp\Big(-\frac{v_x^2}{2\sigma_1^2}-\frac{v_y^2}{2\sigma_2^2}\Big), \\
    B(x)         & =b_0 \sin(kx),
\end{align}
where $\sigma_1=0.02/\sqrt{2}$, $\sigma_2=\sqrt{12} \sigma_1$, $b_0=10^{-4}$ and $k=1.25$. We set $N_p=2\times10^5$, $N_x=100$, $L=\frac{2\pi}{k}$, $\Delta t = 0.05$ and the SAV constant $C_0=1$. The simulation is run up to $t=300$. With this initial condition, the magnetic field energy $\mathcal{E}_B(t)=\frac{1}{2}\|B(t)\|_{L^2}^2$ has a theoretical growth rate of $\gamma_0\approx2\times0.02784$ \cite{weibel-1959}. 

Figure \ref{fig:Electromagnetic} shows the numerical results of the tests, where we plot the time history of the electric energy in $x$ direction ${\mathcal E}_{E_x}^n=\frac{1}{2}\sum_{i=0}^{N_x-1}(E_{x,i+1/2}^n)^2 \Delta x$, the electric energy in $y$ direction ${\mathcal E}_{E_y}^n=\frac{1}{2}\sum_{i=0}^{N_x-1}(E_{y,i}^n)^2 \Delta x$, the magnetic energy $\mathcal{E}_B^n=\frac{1}{2}\sum_{i=0}^{N_x-1} (B^n_{i+1/2})^2 \Delta x$, the original total energy error ${\mathcal E}_{E_x}^n+{\mathcal E}_{E_y}^n+\mathcal{E}_B^n- {\mathcal E}_{E_x}^0 - {\mathcal E}_{E_y}^0-\mathcal{E}_B^0$, the modified total energy error $(r_x^n)^2+(r_y^n)^2-2C_0+{\mathcal E}_{E_y}^n+\mathcal{E}_B^n- {\mathcal E}_{E_x}^0 - {\mathcal E}_{E_y}^0-\mathcal{E}_B^0$, and the Gauss law error $\epsilon_G^n=\sum_{i=0}^{N_x-1}(\frac{E_{x,i+1/2}^n-E_{x,i-1/2}^n}{\Delta x} -\rho_i^n+1)^2\Delta x$ with the periodic condition $E_{x,-1/2}=E_{x,N_x-1/2}$. Both schemes agree well with the theoretical growth rate, and the modified total energy and the Gauss law error are preserved up to machine precision. The run time for VM-II is roughly twice that of VM-I.

\begin{figure}
    \centering
    \includegraphics[width=1.0\textwidth]{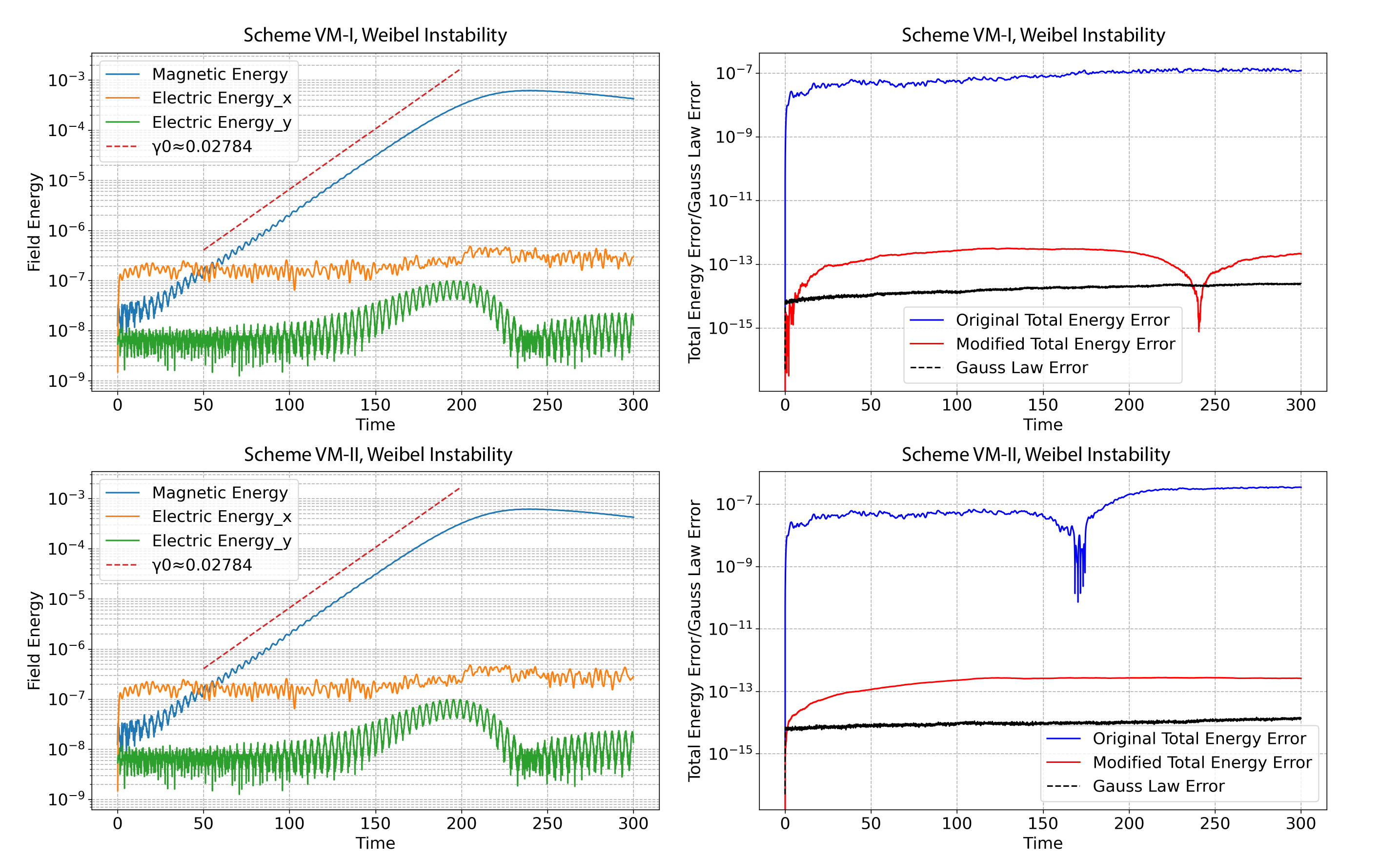}
    \caption{Left column: time evolution of $\mathcal{E}_{E_x}(t)$, $\mathcal{E}_{E_y}(t)$ and $\mathcal{E}_{B}(t)$. Upper panel: scheme VM-I, lower panel: scheme VM-II. Right column: time evolution of $\left|\mathcal{E}_T(t) - \mathcal{E}_{T}(0)\right|$, $\left|\tilde{\mathcal{E}}_T(t) - \tilde{\mathcal{E}}_{T}(0)\right|$ and $\epsilon_G$. Upper panel: scheme VM-I, lower panel: scheme VM-II.}
    \label{fig:Electromagnetic}
\end{figure}

\section{Conclusions and future work}

In this article, we proposed SAV-based reformulated Vlasov--Amp\`ere system and Vlasov-Maxwell system and two different numerical schemes to approximate each system. We used the scalar auxiliary variable as a degree of freedom to preserve a modified energy, with the local charge being preserved thanks to a suitable current deposition technique. 
In the first  scheme (VA-I and VM-I), inspired by \cite{ricketson2025}, an additional update of the scalar variable enables the preservation of the modified energy. In the second scheme (VA-I and VM-II), 
the scalar variable is included in the pusher but the electric field is solved a posteriori. The resulting schemes are second order accurate in time, 
linearly implicit in the fields and explicit in the particles. They are free from the Maxwell's CFL condition and both preserve
exactly the discrete charge and the modified total energy. The numerical experiments illustrate the capability of the schemes to reproduce standard tests from the plasma physics literature. Future work includes further investigation of the differences between these SAV schemes and comparisons with other SAV schemes.

\section*{Acknowledgement}
The work of CF and JH was partially supported by NSF grants DMS-2409858 and IIS-2433957, DOE DE-SC0023164, and DoD MURI FA9550-24-1-0254. 
The work of NC has been carried out within the framework of the EUROfusion Consortium, funded by the European Union via the Euratom Research and Training
Programme (grant agreement No 101052200 EUROfusion). Views and opinions expressed are however those of the authors only and do not necessarily reflect those of the European Union or the European Commission.

\bibliographystyle{unsrturl}
\bibliography{main}

\end{document}